\documentclass[11pt]{amsart}
 
 \textheight=615pt
 \textwidth=360pt
\usepackage{amsmath,amsthm, amsfonts, amsxtra, amssymb, amscd}

\usepackage[english]{babel} 
\usepackage[all]{xy}
\usepackage[backref, colorlinks, linktocpage, citecolor = blue, linkcolor = blue]{hyperref}

\usepackage{graphics, epic}
 \usepackage{graphicx}
\usepackage{multicol}
 \usepackage{tikz}
\usepackage{psfrag}
 \usepackage{young}
 \usepackage{ytableau}
\usepackage[enableskew]{youngtab}
\usepackage{tikz}

%math
% 
\usepackage{mathtools}  % extra mathtools, some things fixes UMS math bugs
\usepackage{bm}         % bold math
\usepackage{dsfont}	% Double stroke roman fonts, e.g. \mathds{1}
\usepackage{ytableau}   % for Young tableaux
\usepackage{microtype}  % Subliminal refinements towards typographical perfection. 
\usepackage{placeins}

\newtheorem*{lemma*}{Lemma}
\newtheorem{lemma}[subsection]{Lemma}
\newtheorem*{theorem*}{Theorem}
\newtheorem{theorem}[subsection]{Theorem}
\newtheorem*{proposition*}{Proposition}
\newtheorem{proposition}[subsection]{Proposition}

\newtheorem*{corollary*}{Corollary}

\newtheorem{corollary}[subsection]{Corollary}

\theoremstyle{definition}
\newtheorem*{definition*}{Definition}

\newtheorem*{example*}{Example}

\theoremstyle{remark}
\newtheorem*{remark*}{Remark}
\newtheorem{remark}[subsection]{Remark}
 \newtheorem{definition}[subsection]{Definition}

\renewcommand{\phi}{\varphi}

\newcommand{\be}{\begin{enumerate}}
\newcommand{\ee}{\end{enumerate}}
\DeclareMathOperator{\tr}{tr}

\newcommand{\one}[0]{\mathds{1}}

\frenchspacing
\title{A construction of swap or switch polynomials}
\author{Claudio Procesi} 

\begin{document}\date{Version from 28.02.2021}
 
\address{Dipartimento di Matematica, G. Castelnuovo,
Universit\`a di Roma La Sapienza, piazzale A. Moro,  00185,
Roma, Italia}
\email{procesi@mat.uniroma1.it}
 \maketitle

\begin{abstract}
We discuss several constructions of {\em swap polynomials}  that is  non commutative  polynomials in matrix variables $x_i\in M_d(\mathbb Q)$  with values  in $M_d(\mathbb Q)^{\otimes 2}$ which are multiples of the transposition operator $(1,2)$.
\end{abstract}\tableofcontents
\section{Introduction}
%Dear all, I have recently been in contact with a guy in Quantum computing who uses polynomial identities (in fact more polynomial inequalities on Hermitian matrices).
Given a positive integer $d$  and a field $F$ denote by $M_d(F)$ the algebra of $d\times d$ matrices with entries in $F$.  We will assume that $F$ has characteristic 0  and in fact work with $F=\mathbb Q$ and then denote  $M_d:=M_d(\mathbb Q).$

Denote by $F\langle X\rangle=F\langle x_1,\ldots,x_i,\ldots\rangle$ the free algebra in the variables $x_i$.  The elements of $F\langle X\rangle$ are usually called {\em non commutative polynomials}.
  Given an associative algebra $A$ over $F$   by definition the homomorphisms of $F\langle X\rangle$ to $A$  correspond to maps $X\to A$ and can be thought of as {\em evaluations} of the variables $X$ in $A$.   
%\begin{definition}\label{PI}
%An element $f\in F\langle X\rangle$ is called a {\em polynomial identity} for $A$  (short a PI)  if it vanishes under all evaluations of $X$ in $A$.
%\end{definition} 
% 
Given any positive integer $k$ we may consider the algebra $A^{\otimes k}$ and then for every evaluation  $\pi:F\langle X\rangle\to A$ of $X$ in $A$ we have a corresponding evaluation  $\pi^{\otimes k}:F\langle X\rangle^{\otimes k}\to A^{\otimes k}$.
\begin{definition}\label{tPI}
The elements of $F\langle X\rangle^{\otimes k}$ will be called {\em tensor polynomials}.  
They can be thought of as (non commutative) polynomials in the {\em tensor  variables}  $x_j^{(i)}:=1^{\otimes i-1}\otimes x_j  \otimes  1^{\otimes  k-i }$.
An element $f\in F\langle X\rangle^{\otimes k}$ is called a {\em tensor  polynomial identity} for $A$  (short a TPI)  if it vanishes under all evaluations of $X$ in $A$.
\end{definition} 

%
%This paper we construct special    polynomial maps     $f: M_d^k\to M_d^{\otimes n }$  which are equivariant  under the conjugation action of $GL(\mathbb Q,d).$     We start from the  non commutative polynomials in the tensors  $x_j^{(i)}:=1^{\otimes i-1}\otimes x_j  \otimes  1^{\otimes  m-i }$, where $x_j$ is a matrix variable.
%\begin{definition}\label{teva}
%We call the  $x_j^{(i)}$ {\em tensor variables} and the polynomials they generate {\em tensor polynomials}.
%\end{definition}  
   We then devote our study to {\em permutation valued polynomials} and in particular to {\em swap polynomials}. \smallskip

\begin{definition}\label{swa}
Consider a   non commutative $2$--tensor polynomial $f\in F\langle X\rangle^{\otimes 2}$ in   variables    $x_i$. Then $f$   is called a {\em swap polynomial} for $d\times d$ matrices  if, when evaluated  in the algebra of $d\times d$ matrices     it is not a TPI and  takes values only multiples of the exchange (or swap)  operator $(1,2):a\otimes b\mapsto b\otimes a$, $(1,2)\in M_d^{\otimes 2}$.

A  $2$--tensor polynomial  $f=\sum_i A_i\otimes B_i$ is {\em balanced}  if all $A_i, B_i$ are homogeneous of the same degrees. \end{definition}

This notion appears in a recent preprint  {\em Translating Uncontrolled Systems in Time,} by  David Trillo, Benjamin Dive, and Miguel Navascu\'es,\ arXiv:1903.10568v2 [quant--ph]  28 May 2020 \cite{TDN}. The authors introduce  some special tensor valued polynomials.  In particular they prove the existence of such operators using the classical theory of central polynomials  developed independently by Formanek and Razmyslov.  The resulting swap polynomials have usually very large degrees, although they exhibit such a polynomial for 2  tensor $2\times 2$ matrix variables   of degree 10. In their notations the degree is 5, or $5+5$, which according to their computations is the least degree for a balanced swap polynomial. 
They consider a tensor variable $x_i\otimes x_j$  of degree 1  while in this paper we consider it of degree 2 in the matrix variables, so their polynomial is of degree 5 in the tensor variables.
\medskip

Swap polynomials       belong to the interesting class of {\em permutation valued polynomials} that is polynomials whose values are a scalar times a constant linear combination of permutation operators. 

 The existence of such polynomials is a consequence of the existence of central polynomials 
and  follows easily from the so called {\em Goldman elements} as in Saltman, \cite{salt}, see Theorem \ref{Gol}. 
\smallskip

We discuss first  the case of $2\times 2$ matrices and two variables, which can be described almost completely. Then, for general $d\times d$ matrices
we   propose a   canonical construction of a  swap polynomial of degree $2d^2$   in $2d^2$  variables.

In particular we  exhibit a  swap polynomial of degree 8 (or 4 in their language) but in 8 variables for $2\times 2$ matrices.

The construction is based on a   canonical central polynomial proposed by Regev and proved to be non zero by Formanek. The advantage of these polynomials is that they are alternating in two sets of $d^2$  variables which implies that we know an explicit formula for their values.

%Moreover they have also the {\em conductor property} of absorbing traces.
\smallskip

The paper  \cite{TDN} was pointed out to me by Felix  Huber while discussing his  recent work 
{\em Positive maps and trace polynomials from the Symmetric Group}  arXiv:2002.12887 28--2--2020.\smallskip

\section{Tensor Polynomials}

\subsection{Azumaya algebras,  the     {\em Goldman element}\label{bira}} The following Definition and Theorem is attributed, with no reference,  to Oscar Goldman in  the book of M. A. Knus, M. Ojanguren,  \textit{Th\'eorie de la descente et alg\`ebres d'Azumaya} page 112 
\cite{KnusO}.\smallskip

Let $R$ be a rank $n^2$ Azumaya algebra over its center $A$. By definition of Azumaya algebra  the map $$\pi:R\otimes _AR^{op}\to End_A(R),\ \pi(
\sum_ia_i\otimes b_i)(x)=\sum_ia_ixb_i$$ is an isomorphism, moreover there is a faithfully flat extension $A\to B$ so that $B\otimes_AR\simeq M_n(B)$  and the trace of $ M_n(B)$ restricted to $R$ takes values in $A$, so we have an $A$--linear map $tr:R\to A\subset R$.
\begin{definition}\label{goele}
We define  the     {\em Goldman element}     $\mathtt t\in R\otimes _AR$ by $\boxed{\pi(\mathtt t)(x):=tr(x)}.$ 
\end{definition}
\begin{theorem}\label{Gol} The  Goldman element satisfies 
\begin{equation}\label{Gol1}
\mathtt t^2=1,\quad \mathtt t(a\otimes b)\mathtt t^{-1}=b\otimes a.
\end{equation} 
\end{theorem}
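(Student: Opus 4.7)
The plan is to use the faithful flatness of a splitting extension $A \to B$ to reduce both identities to an explicit computation, and then to carry out that computation in $M_n(B) \otimes_B M_n(B)$.

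First I would check that the formation of the Goldman element commutes with base change: for any flat $A$-algebra $B$, the tensor product $B \otimes_A R$ is again Azumaya of rank $n^2$ over $B$, the Azumaya isomorphism $\pi$ base-changes to the analogous isomorphism for $B \otimes_A R$, and the trace commutes with $B \otimes_A -$. It follows that $1_B \otimes \mathtt{t}$ satisfies the defining equation of the Goldman element of $B \otimes_A R$, so it must equal it. Because $A \to B$ is faithfully flat, an identity in $R \otimes_A R$ holds if and only if it holds after tensoring with $B$. Choosing $B$ so that $B \otimes_A R \simeq M_n(B)$ therefore reduces the theorem to the case $R = M_n(C)$ for a commutative ring $C$.

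The second step is to pin down $\mathtt{t}$ explicitly in the split case. The elementary identity $\sum_{i,j} e_{ij} x e_{ji} = \mathrm{tr}(x)\cdot 1$ for matrix units $e_{ij}$, combined with the injectivity of $\pi$, forces $\mathtt{t} = \sum_{i,j} e_{ij}\otimes e_{ji}$. With this formula in hand both statements in (\ref{Gol1}) reduce to a short matrix-unit bookkeeping in $R \otimes_A R$: the four-fold product defining $\mathtt{t}^2$ collapses via $e_{ab}e_{cd} = \delta_{bc} e_{ad}$ to $\bigl(\sum_i e_{ii}\bigr)\otimes\bigl(\sum_j e_{jj}\bigr) = 1$, and by linearity it suffices to check the conjugation identity on an elementary tensor $e_{pq}\otimes e_{rs}$, where the four Kronecker deltas pin down a single surviving term equal to $e_{rs}\otimes e_{pq}$.

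The main point of care is to keep the algebra $R\otimes_A R$, in which Theorem \ref{Gol} lives, distinct from the algebra $R\otimes_A R^{op}$ used to define $\pi$. Working through $\pi$ alone one would compute $\mathtt{t}^2 = n\,\mathtt{t}$ in $R\otimes_A R^{op}$ (since $\mathrm{tr}(\mathrm{tr}(x)\cdot 1) = n\,\mathrm{tr}(x)$), and confusing this with the theorem's statement is the chief pitfall; the identity $\mathtt{t}^2 = 1$ is a genuinely different equation in the ordinary tensor product algebra $R\otimes_A R$, and the matrix-unit computation after the faithfully flat reduction is precisely what verifies it.
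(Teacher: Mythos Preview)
Your proposal is correct and follows essentially the same approach as the paper: reduce to the split case via a faithfully flat extension, identify $\mathtt{t}$ there with $\sum_{i,j} e_{ij}\otimes e_{ji}$ by the uniqueness in the definition, and verify both identities by direct matrix-unit computation. Your explicit discussion of base-change compatibility and the $R\otimes_A R$ versus $R\otimes_A R^{op}$ distinction is a bit more careful than the paper's terse ``by uniqueness,'' but the argument is the same.
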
   
 \begin{proof} 
 %First uniqueness. If $  \mathtt t,\, \mathtt s$ both satisfy \eqref{Gol1} then
% $$(\mathtt s\mathtt t)^{-1}=\mathtt t\mathtt s,\  \mathtt s\mathtt t(a\otimes b)(\mathtt s\mathtt t)^{-1}=a\otimes b, $$ so $\mathtt s\mathtt t=\alpha\in A$  with $\alpha$ invertible  and $\alpha^2=1$.

Under the faithfully flat extension $A\to B$ the element $\mathtt t$ must map to $  \sum_{i,j=1}^ne_{i,j}\otimes e_{j,i}$, by uniqueness since this element satisfies the same property:
$$\sum_{i,j=1}^ne_{i,j}e_{h,k}e_{j,i} =\begin{cases}
0\ \text{if}\ h\neq k\\
1 \ \text{if}\ h= k
\end{cases}.$$ Then the properties of Formula \eqref{Gol1} can be verified directly:
$$\mathtt t^2=\sum_{i,j=1}^ne_{i,j}\otimes e_{j,i}\sum_{h,k=1}^ne_{h,k}\otimes e_{k,h} =\sum_{i,j=1}^ne_{i,i}\otimes e_{j,j}=1.$$
The second  property can again be verified in the split algebra and it is
$$(\sum_{i,j=1}^ne_{i,j}\otimes e_{j,i})e_{a,b}\otimes e_{c,d}(\sum_{h,k=1}^ne_{h,k}\otimes e_{k,h})=e_{c,d}\otimes  e_{a,b}. $$

\end{proof}By Corollary 10.4.3  of \cite{agpr} we can take as $B=A_n(R)$ the  commutative ring giving the universal  map into matrices that is for any  map $j:R\to M_n(C)$ one has a map $\bar j:A_n(R)\to C$ making commutative the diagram.
\begin{equation}\label{codd}
\xymatrix{
R\ar[r]^{
\!i\ \quad  }\ar[rd]_j
& M_n(A_n(R))  \ar[d]^{M_n(\bar j)}\\
&M_n(C)}\end{equation}It follows that if $\rho:R\to S$ is a ring homomorphism of rank $n^2$ Azumaya algebras  we have
\begin{equation}\label{codd1}
\xymatrix{
R\ar[r]^{
\!i_R\ \quad  }\ar[d]_\rho
& M_n(A_n(R))  \ar[d]^{M_n(\bar i_S\circ\rho)}\\
S\ar[r]^{
\!i_S\ \quad  }& M_n(A_n(S))}\end{equation}and $\mathtt t_R,\ \mathtt t_S$ the respective Goldman elements we have $\rho(\mathtt t_R)=\mathtt t_S$, hence:
\begin{corollary}\label{ges}
Under any splitting $C\otimes_AR\simeq M_n(C)$,   the element $\mathtt t$ maps to the switch operator on $C^n\otimes C^n$. 
\end{corollary}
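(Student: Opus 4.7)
The plan is to reduce an arbitrary splitting to the universal one by invoking the universal property of $A_n(R)$ recalled just above, and then transport the value of $\mathtt t$ already computed in the proof of Theorem \ref{Gol}. The key observation is that the corollary is essentially a naturality statement: the element $\sum_{h,k} e_{h,k}\otimes e_{k,h}\in M_n(D)^{\otimes 2}$, which manifestly acts as the switch on $D^n\otimes D^n$, is defined over $\mathbb Z$ and is preserved under any base change of the coefficient ring $D$.

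First, I would recall that in the proof of Theorem \ref{Gol} it was shown that under the universal splitting $i:R\to M_n(A_n(R))$ the image $(i\otimes i)(\mathtt t)$ equals $\sum_{h,k=1}^{n} e_{h,k}\otimes e_{k,h}$, and that (by the same matrix-unit computation used to verify $\mathtt t^{2}=1$) this element is exactly the switch operator on $A_n(R)^{n}\otimes A_n(R)^{n}$. Next, given any splitting $\sigma: C\otimes_A R\simeq M_n(C)$, compose $r\mapsto 1\otimes r$ with $\sigma$ to obtain an $A$-algebra homomorphism $j:R\to M_n(C)$. The universal property of $A_n(R)$ in diagram \eqref{codd} supplies a unique $A$-algebra map $\bar j: A_n(R)\to C$ with $j=M_n(\bar j)\circ i$. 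Tensoring this factorization with itself over $A$ yields the commutative chain
$$
R\otimes_A R\xrightarrow{\,i\otimes i\,} M_n(A_n(R))^{\otimes 2}\xrightarrow{\,M_n(\bar j)^{\otimes 2}\,} M_n(C)^{\otimes 2},
$$
whose composite is $j\otimes j$. Evaluating at $\mathtt t$ and using that $M_n(\bar j)$ sends each matrix unit $e_{h,k}$ to the corresponding matrix unit in $M_n(C)$, one gets $(j\otimes j)(\mathtt t)=\sum_{h,k} e_{h,k}\otimes e_{k,h}$ in $M_n(C)^{\otimes 2}$, which is the switch on $C^{n}\otimes C^{n}$.

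I do not expect any substantive obstacle. The one minor point worth making precise is the compatibility of the tensor products: $i\otimes i$ is an $A$-algebra map landing in $M_n(A_n(R))\otimes_{A_n(R)} M_n(A_n(R))$, and base change along $\bar j:A_n(R)\to C$ identifies this with $M_n(C)\otimes_C M_n(C)$. Once this bookkeeping is in place, the corollary is an immediate consequence of the universal property \eqref{codd}--\eqref{codd1} and the explicit computation of $\mathtt t$ already performed in the theorem.
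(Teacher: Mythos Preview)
Your proposal is correct and follows essentially the same route as the paper. The paper's one-line proof relies on the preceding discussion (diagrams \eqref{codd}--\eqref{codd1} and the consequence $\rho(\mathtt t_R)=\mathtt t_S$), and you have simply unpacked that ``hence'' by making explicit the factorization through the universal splitting $i:R\to M_n(A_n(R))$ and the base-change preservation of $\sum_{h,k}e_{h,k}\otimes e_{k,h}$; the only thing the paper adds is the verification $\sum_{i,j}e_{i,j}\otimes e_{j,i}(e_a\otimes e_b)=e_b\otimes e_a$ that this element really is the switch.
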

\begin{proof}
The element  $\mathtt t$ maps to $\sum_{i,j=1}^ne_{i,j}\otimes e_{j,i}$ which is the switch operator on $C^n\otimes C^n$ since
$$\sum_{i,j=1}^ne_{i,j}\otimes e_{j,i}(e_{a }\otimes e_{b}) =e_{b }\otimes e_{a} .$$
\end{proof}
\begin{remark}\label{nou}
The properties of Formula \eqref{Gol1}  determine $\mathtt t$ only up to a multiplicative scalar  $\alpha$ with $\alpha^2=1$.

In particular if $A$ is a domain $\alpha=\pm 1$.\end{remark}
If  $R$  is a free rank $n^2$  module over $A$ with basis $a_1,a_2,\ldots,a_{n^2}$  then there is a unique dual basis for the trace form $tr(xy)$. That is, there are unique elements  $a_1^*,a_2^*,\ldots,a_{n^2}^*$ with $tr(a_ia_j^*)=\delta_i^j$.  Then we have   \begin{equation}\label{ledua}
\mathtt t=\sum_{i=1}^{n^2} a_i\otimes a_i^*.
\end{equation}
This depends upon the fact that the element $\sum_{i=1}^{n^2} a_i\otimes a_i^* $ is independent of the  basis chosen.  
For $R=M_n(A)$  the dual basis  of   elementary matrices is $e_{i,j}^*=e_{j,i}$. So under the faithfully flat splitting the element $\sum_{i=1}^{n^2} a_i\otimes a_i^*$  coincides with $\sum_{i,j=1}^ne_{i,j}\otimes e_{j,i}$.

\begin{remark}
In the Physics literature for $n=2$ and $A=\mathbb C$ one has the {\em Pauli matrices} which form, up to the normalizing factor $\frac 1{\sqrt{2}}$,  an orthonormal basis for the trace form  restricted to Hermitian matrices where it is positive.
\end{remark}\smallskip

\subsection{The case of generic matrices}
In the Theory of algebras with polynomial identities, see  \cite{agpr}, one has the basic    algebra $R_{k,d}:=F[\xi_1,\ldots,\xi_k]$  of polynomials in $k$ generic $d\times d$ matrices $\xi_i=(\xi_{h,k}^{(i)})$ which one identifies with the {\em non commutative polynomial functions in $k$ matrix variables}. Here the variables $\xi_{h,k}^{(i)}$ are the coordinates of  the $kd^2$ vector space $M_d(F)^k$. See \cite{agpr} for details. This algebra is a domain with a center $Z$. If $G$ is the field of fractions of $Z$  we have, as soon as $k>1$, that $R_{k,d}\otimes _ZG:=D_{k,d}$ is a division algebra of dimension $d^2$ over its center $G$.  Moreover $G$ is the field of rational functions on      $M_d^k$  invariant   under conjugation action by $GL(d,F)$ the group of invertible matrices. Finally the polynomial  functions on      $M_d^k$  invariant   under conjugation   are generated by the traces of the monomials in the $\xi_i$. Thus we have the Goldman element  $\mathtt t\in D_{k,d}\otimes_GD_{k,d}$.

Notice that this element is {\em independent} of the variables $\xi_i$ in particular we may  find it in the algebra of just two variables.\smallskip  

In general a rational function $f\in D_{k,d}$ can be evaluated on an open set of matrices and on this set the evaluation of  $\mathtt t$ is the switch operator  $(1,2)$.  To find a swap polynomial is equivalent to find a common denominator   $c$ for $\mathtt t$   which is a scalar polynomial, that is a {\em central polynomial}. In fact this can be done in several ways.\smallskip

The algebra $R_{k,d}:=F[\xi_1,\ldots,\xi_k],\ k\geq 2$ has a non trivial center due to Formanek \cite{formanek3} and Razmyslov  \cite{razmyslov2}. If $c\in F[\xi_1,\ldots,\xi_k]$ is an element of the center with no constant term then inverting $c$ one has that the algebra 
$ F[\xi_1,\ldots,\xi_k][c^{-1}]$ is Azumaya of rank $n^2$,  
Corollary 10.3.5 of \cite{agpr}. Thus it has a Goldman element which in fact coincides with that  $\mathtt t$   of  $D_{k,d}$. 

Thus  for each such $c$  there exist $a_i,b_i\in  F[\xi_1,\ldots,\xi_k]$ and $h>0$ so that  we have   $\mathtt t=c^{-h} \sum_ia_i\otimes b_i,\ a_i, \ b_j\in   F[\xi_1,\ldots,\xi_k]$.  In other words,    by adding an extra variable $\zeta$  we have the identity 
\begin{equation}\label{trac}
c^{ h} \mathtt t=\sum_ia_i\otimes b_i,\quad \sum_ia_i\zeta  b_i=c^h tr(\zeta).
\end{equation} The element  $\sum_ia_i\otimes b_i$ is thus a swap polynomial.
\begin{theorem}\label{cca}
A tensor polynomial $\sum_ia_i\otimes b_i$ is  a swap polynomial if and only if adding a variable $\zeta$ we have that 
$\sum_ia_i\zeta b_i$ is a central polynomial  which vanishes for $\zeta$ with $tr(\zeta)=0$.\end{theorem}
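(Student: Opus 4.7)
The plan is to reduce both directions of the theorem to the standard $F$-linear isomorphism
\[
\pi\colon M_d\otimes M_d \xrightarrow{\sim} \operatorname{End}_F(M_d),\qquad \pi(a\otimes b)(\zeta) = a\zeta b,
\]
already exploited in Section \ref{bira}. The key fact, explicit in the proof of Theorem \ref{Gol}, is that $\pi$ carries the swap $(1,2)=\mathtt t = \sum_{i,j}e_{i,j}\otimes e_{j,i}$ to the rank-one endomorphism $\zeta\mapsto \operatorname{tr}(\zeta)\cdot 1$. Consequently, for any evaluation of $X$ in $M_d$, the condition ``the value of $\sum_i a_i\otimes b_i$ is $\lambda\cdot(1,2)$'' is equivalent, via $\pi$, to ``$\sum_i a_i\zeta b_i$ evaluates to $\lambda\cdot\operatorname{tr}(\zeta)\cdot 1$'' for every $\zeta\in M_d$.

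For the forward direction I would fix an evaluation sending $a_i\mapsto A_i$ and $b_i\mapsto B_i$, invoke the swap hypothesis to write $\sum_i A_i\otimes B_i = \lambda\cdot(1,2)$ with $\lambda\in F$, and apply $\pi$ to obtain $\sum_i A_i\zeta B_i = \lambda\cdot\operatorname{tr}(\zeta)\cdot 1$. Since this holds at every evaluation, $\sum_i a_i\zeta b_i$ is scalar-valued (a central polynomial) and manifestly vanishes whenever $\operatorname{tr}(\zeta)=0$.

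For the converse I would again fix an evaluation. Centrality of $\sum_i a_i\zeta b_i$ forces the linear map $\zeta\mapsto \sum_i A_i\zeta B_i$ from $M_d$ to $M_d$ to land in the scalar line $F\cdot 1$, so it is given by a linear functional $M_d\to F$. The vanishing hypothesis says this functional vanishes on the codimension-one subspace $\{\zeta:\operatorname{tr}(\zeta)=0\}$, so it must be a scalar multiple of $\operatorname{tr}$: $\sum_i A_i\zeta B_i = \lambda\cdot\operatorname{tr}(\zeta)\cdot 1$ for some $\lambda\in F$. Then $\pi(\sum_i A_i\otimes B_i) = \pi(\lambda\cdot(1,2))$, and injectivity of $\pi$ yields $\sum_i A_i\otimes B_i = \lambda\cdot(1,2)$, which is exactly the swap polynomial condition.

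Once the isomorphism $\pi$ is in place the argument is essentially bookkeeping; the only mildly subtle point is the converse, where one must combine centrality (to reduce the matrix-valued map to a scalar-valued linear functional) with vanishing on the trace-zero hyperplane (to identify that functional, up to scalar, with $\operatorname{tr}$). The scalar $\lambda$ depends on the evaluation of the remaining variables, but this causes no difficulty: the definition of swap polynomial imposes its condition pointwise over all evaluations, and a pointwise-chosen $\lambda$ is all that is needed.
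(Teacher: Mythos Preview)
Your proof is correct and follows essentially the same route as the paper's. Both directions hinge on the identification of $(1,2)$ with the endomorphism $\zeta\mapsto\operatorname{tr}(\zeta)\cdot 1$; the paper carries out the converse by a divisibility argument for the polynomial invariant $\beta$ (writing $\beta=\operatorname{tr}(\zeta)\alpha$ with $\alpha$ independent of $\zeta$), while you do the equivalent step pointwise via the linear-functional argument and then invoke injectivity of $\pi$ explicitly---but these are the same idea in slightly different dress.
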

\begin{proof}
If $\sum_ia_i\otimes b_i$ is  a swap polynomial then it is   equal, as function on matrices, to $\alpha (1,2)=\alpha \sum_{i,j=1}^ne_{i,j}\otimes e_{j,i} $ with $\alpha$ an invariant  scalar function. Then
$$\sum_ia_i\zeta b_i= \alpha \sum_{i,j=1}^ne_{i,j}\zeta e_{j,i}=\alpha\cdot  tr(\zeta)1. $$
Conversely if $\sum_ia_i\zeta b_i=\beta 1$ is a central polynomial, $\beta$ some polynomial invariant,  which vanishes for $\zeta$ with $tr(\zeta)=0$ then  $\beta=tr(\zeta)\alpha $ is divisible by $tr(\zeta)$. Since $\beta$ is linear in $\zeta$  we have $\alpha $ is independent of $\zeta$ and $\sum_ia_i\zeta b_i=tr(\zeta)\alpha 1$ implies 
$$\sum_ia_i\otimes b_i=\alpha\mathtt t. $$
\end{proof}
\begin{remark}\label{cenn}
For a swap  polynomial $\sum_ia_i\otimes b_i=\alpha \mathtt t$    we have  $n\alpha=\sum_ia_ib_i$ so as soon as the characteristic does not divide  $n$ we also have that $n \alpha=\sum_ia_ib_i$ is a central polynomial.
\end{remark}
\section{$2\times 2$  matrices}  \subsection{Swap polynomials in two variables}
%   \begin{definition}\label{swap}
%A swap polynomial  of order $d$ is a 2-tensor valued polynomial $F(x_1,\ldots,x_n)$ whose  values on $d\times d$ matrices are of the form $f(x)(1,2)$ with $f(x_1,\ldots,x_n)$ a scalar function.
%\end{definition}
In \cite{TDN}  the authors explain a method to construct balanced swap polynomials, Definition \ref{swa}. The condition of being balanced is necessary for their applications.   They exhibit one, let us denote it by $Q(x,y)$, in two variables  $x,y$ for $2\times 2 $ matrices, involving 40 terms and of degree 10 (or 5 +5 in their terminology). 
\begin{equation}\label{qab}
Q(x,y):=
\end{equation}
$$x y^2 x  y   \otimes  x y^2 x  y - 
 x y^2 x  y   \otimes y^2 x^2  y - 
 x y^3 x \otimes  x y^2 x  y + 
 x y^3 x \otimes  x y^3 x + 
 x y^3 x \otimes  y  x  y  x  y$$$$ - 
 x y^3 x \otimes y^2 x  y  x - 
 x y^4  \otimes  x  y  x  y  x + 
 x y^4  \otimes  y  x^2  y  x - 
 y  x y^2 x \otimes  x y^2 x  y - 
 y  x y^2 x \otimes  x y^3 x $$$$+ 
 y  x y^2 x \otimes  y  x  y  x  y + 
 y  x y^2 x \otimes y^3 x^2 - 
 y  x y^3  \otimes  x  y  x  y  x + 
 y  x y^3  \otimes  x y^2 x^2 + 
 y  x y^3  \otimes  y  x  y  x^2 $$$$- 
 y  x y^3  \otimes y^2 x^3+ 
y^2 x  y  x \otimes  x y^2 x  y - 
y^2 x  y  x \otimes  x y^3 x - 
y^2 x  y  x \otimes  y  x  y  x  y + 
y^2 x  y  x \otimes  y  x y^2 x $$$$- 
y^2 x y^2  \otimes  x  y  x^2  y + 
y^2 x y^2  \otimes  x  y  x  y  x + 
y^2 x y^2  \otimes  y  x^3 y - 
y^2 x y^2  \otimes  y  x^2  y  x + 
y^3 x^2   \otimes  x y^3 x $$$$- 
y^3 x^2   \otimes  y  x  y  x  y + 
y^3 x^2   \otimes y^2 x^2  y - 
y^3 x^2   \otimes y^3 x^2 + 
y^3 x  y   \otimes  x  y  x  y  x - 
y^3 x  y   \otimes  x y^2 x^2$$$$ - 
y^3 x  y   \otimes  y  x  y  x^2 + 
y^3 x  y   \otimes y^2 x^3- 
y^4 x \otimes  x^2 y^2 x + 
y^4 x \otimes  x  y  x^2  y + 
y^4 x \otimes  x  y  x  y  x $$$$- 
y^4 x \otimes  y  x^3 y - 
y^4 x \otimes  y  x  y  x^2 + 
y^4 x \otimes y^2 x^3+ 
y^5   \otimes  x^2  y  x^2 - 
y^5   \otimes  x  y  x^3$$ This   has been built by a computer program, and I have verified it,    and now I want to give a  theoretical explanation for its existence and that of many other swap polynomials, Theorem \ref{esss}. Its value, checked by Computer,  is 
 \begin{equation}\label{val}
tr(y)^2\det([x,y]) ^2\mathtt t= tr(y)^2 [x,y]  ^4\mathtt t.
\end{equation}   
\paragraph{Formulas}  This topic is treated in detail in Chapter 9   of \cite{agpr}. We want to recall some formulas which will be useful to understand swap polynomials.

The case of two $2\times 2$ matrices $x,y$ is fully treated in the 1981 paper of Formanek, Halpin \cite{Foli} and will be quickly reviewed here. Start with  Exercise 9.1.1    of \cite{agpr}. \begin{proposition}\label{FolI}
The ring $T$ of invariants of 2, $2\times 2$ matrices $x,y$ is the polynomial ring in 5 generators
 \begin{equation}\label{invdgm}
 tr(x),\ \det(x),\  tr(y),\ \det(y),\  tr(xy).
\end{equation}   The ring  $S$ of  equivariant maps of 2, $2\times 2$ matrices $x,y$ to $2\times 2$  matrices (also called the {\em trace algebra}) is a free module $S=T+Tx+Ty+Txy$ over the ring of invariants  with basis
 \begin{equation}\label{basdgm}
1,\ x,\ y,\ xy.
\end{equation}  The multiplication table is given by Cayley--Hamilton:
$$ x^2=tr(x)x-\det(x),\ y^2=tr(y)y-\det(y),$$\begin{equation}\label{polCH}
 yx=-xy+tr(x)y+tr(y)x+tr(xy)-tr(x)tr(y).
\end{equation}
\end{proposition}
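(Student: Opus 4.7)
The plan is to combine two standard inputs. First, by the first fundamental theorem for invariants of $n$--tuples of matrices (see Ch.~9 of \cite{agpr}), every polynomial invariant of two $2\times 2$ matrices under simultaneous conjugation is a polynomial in traces of monomials $\tr(m(x,y))$. So, for the invariant-ring statement, it suffices to express each such trace as a polynomial in the five quantities of \eqref{invdgm}. Second, Cayley--Hamilton for a single $2\times 2$ matrix gives $z^2 = \tr(z)z - \det(z)$; polarizing this identity with $z = x+y$ and using the polarization of the determinant, $\det(x+y) = \det(x) + \det(y) + \tr(x)\tr(y) - \tr(xy)$, yields the three identities collected in \eqref{polCH}.

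Next I would run an induction on the length of a monomial $m$ in $x, y$: every occurrence of $x^2$ or $y^2$ shortens the word, while the relation for $yx$ lets one straighten any internal $yx$ into $-xy$ modulo strictly shorter words. Iterating puts $m$ into the claimed normal form in $T\cdot 1 + T\cdot x + T\cdot y + T\cdot xy$, simultaneously proving that $S$ is spanned over $T$ by $\{1, x, y, xy\}$ and, by taking traces of both sides, that every $\tr(m(x,y))$ is a polynomial in the five basic invariants.

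It remains to establish algebraic independence of the five generators and $T$--freeness of the spanning set. For the former, a dimension count suffices: a generic $PGL_2$-orbit on $M_2^2$ has dimension $3$, so the invariant field has transcendence degree $8-3 = 5$, and any set of five generators must then be algebraically independent. For the latter, I would specialize $x, y$ to a pair of non-commuting matrices with distinct eigenvalues; the specialized elements $1, x, y, xy$ form a $\mathbb Q$--basis of $M_2(\mathbb Q)$, so any nontrivial $T$--linear relation on them in $S$ would collapse to a nontrivial relation on the fibre, a contradiction.

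The main obstacle is the inductive reduction step: making precise that the three Cayley--Hamilton identities of \eqref{polCH} suffice to reach the normal form. It is elementary but combinatorially finicky, since straightening $yx \mapsto -xy + \cdots$ is applied inside longer monomials, and one needs a well-order on words (for instance the lexicographic order with $y > x$, refined by total length) under which each rewrite strictly decreases, ensuring termination.
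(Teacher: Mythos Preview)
The paper does not actually prove this proposition: it is stated with a pointer to Exercise~9.1.1 of \cite{agpr} and to \cite{Foli}, and then used freely. Your outline is the standard argument and is essentially correct.

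Two small remarks. First, the termination of the straightening can be sidestepped entirely: once you have the three identities of \eqref{polCH}, just check that the $T$-submodule spanned by $1,x,y,xy$ is closed under left multiplication by $x$ and by $y$ (eight easy products); since $S$ is generated as a $T$-algebra by $x$ and $y$, this yields the spanning statement without any word-order bookkeeping. Second, in your freeness step the hypothesis ``non-commuting with distinct eigenvalues'' is not quite enough to force $1,x,y,xy$ to be a basis of $M_2$: with $x=\mathrm{diag}(1,-1)$ and $y$ upper triangular with nonzero off-diagonal entry and distinct diagonal entries, one has $xy\in\mathrm{span}(1,x,y)$. The correct generic condition is $\det([x,y])\neq 0$; the paper later records that the discriminant of the basis $1,x,y,xy$ equals $-[x,y]^4$, which makes this transparent.
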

Of course in characteristic 0 we may replace the generators  of Formula  \eqref{invdgm} with
 \begin{equation}\label{invdgm1}
 tr(x),\ tr(x^2),\  tr(y),\ tr(y^2),\  tr(xy).
\end{equation} 
Let $R:=F\langle x,y\rangle\subset S$ be the subalgebra generated by the two {\em generic matrices} $x,y$. This is also the free algebra in two variables modulo the ideal of polynomial identities of $2\times 2$ matrices.

The structure of $R$ is  deduced in   \cite{Foli} from the following identities:
\begin{equation}\label{ii}
\begin{matrix}
[x,y]x=tr(x)[x,y]-x[x,y],&[x,y]y=tr(y)[x,y]-y[x,y]\\\\\det(x)[x,y] =x[x,y]x ,&\det(y)[x,y] =y[x,y]y\\ \\tr(xy)[x,y]= xyxy-yxyx.
\end{matrix}
\end{equation} From which it follows (Lemma (2) of \cite{Foli}):
\begin{proposition}\label{comm}
The commutator ideal  $R[x,y]R$   equals  $S[x,y]=S[x,y]S$.
\end{proposition}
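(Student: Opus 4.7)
The plan is to prove the two inclusions separately, using the identities \eqref{ii} to move $[x,y]$ past the module generators $1,x,y,xy$ of $S$ over $T$, and to rewrite each generator of the invariant ring $T$ times $[x,y]$ as something visibly in $R[x,y]R$.

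For the inclusion $R[x,y]R\subseteq S[x,y]$, I would first observe that $S[x,y]$ is a two-sided ideal of $S$. Indeed, $S = T + Tx + Ty + Txy$ as a $T$-module, and it suffices to check that $[x,y]\cdot s \in S[x,y]$ for $s\in\{1,x,y,xy\}$. The first two cases are exactly the identities $[x,y]x = tr(x)[x,y] - x[x,y]$ and $[x,y]y = tr(y)[x,y] - y[x,y]$ from \eqref{ii}. The case $s=xy$ follows by iterating these two. Hence $[x,y]\cdot S\subseteq S[x,y]$, so $S[x,y]=S[x,y]S$ is a two-sided ideal. Since $R\subseteq S$, we conclude $R[x,y]R\subseteq S[x,y]S = S[x,y]$.

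For the reverse inclusion I would exploit that, by Proposition \ref{FolI}, $S[x,y]$ is generated as a $T$-module by the four elements $[x,y]$, $x[x,y]$, $y[x,y]$, $xy[x,y]$, all of which clearly lie in $R[x,y]R$. It therefore suffices to show that $R[x,y]R$ is closed under multiplication by $T$. Since $T$ is generated as a $\mathbb Q$-algebra by $tr(x),\ tr(y),\ tr(xy),\ \det(x),\ \det(y)$, which are central in $S$, it is enough to show for each such generator $t$ that $t\cdot(r[x,y]r')\in R[x,y]R$ for all $r,r'\in R$. Centrality lets us write $t\cdot r[x,y]r' = r\,(t[x,y])\,r'$, so the task reduces to showing $t[x,y]\in R[x,y]R$ for each generator $t$, and then closing under multiplication on either side by $R$.

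For the five generators this is exactly what the identities \eqref{ii} give: $tr(x)[x,y] = x[x,y]+[x,y]x$, $tr(y)[x,y] = y[x,y]+[x,y]y$, $\det(x)[x,y] = x[x,y]x$, $\det(y)[x,y] = y[x,y]y$, and finally $tr(xy)[x,y] = xyxy-yxyx$, which after the rearrangement
\[
xyxy - yxyx \;=\; xy(xy-yx) + (xy-yx)yx \;=\; xy[x,y] + [x,y]yx
\]
sits visibly in $R[x,y]R$. The only step that is not immediately of the form $r[x,y]r'$ is this last one, so I expect the $tr(xy)$ case to be the one small obstacle; everything else is a direct reading of the identities listed in \eqref{ii}.
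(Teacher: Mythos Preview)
Your proof is correct and follows exactly the route the paper indicates: the paper does not spell out a proof but simply states that the proposition ``follows from'' the identities \eqref{ii} (citing Lemma~(2) of \cite{Foli}), and your argument is precisely the natural unpacking of that remark---using the first pair of identities to show $S[x,y]$ is a two-sided ideal of $S$, and the full list to show each generator of $T$ times $[x,y]$ lies in $R[x,y]R$. The rewrite $xyxy-yxyx=xy[x,y]+[x,y]yx$ for the $tr(xy)$ case is exactly what is needed, so there is no obstacle there.
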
 From this Theorem (3) of \cite{Foli} follows.
\begin{theorem}\label{dec} We have the decomposition of $R$ as vector space:
\begin{equation}\label{dee}
R=\oplus_{i,j} Fx^iy^j\oplus S[x,y].
\end{equation}

\end{theorem}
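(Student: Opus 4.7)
The strategy is two-step: first show that $\sum_{i,j} Fx^iy^j + S[x,y]$ equals all of $R$, then show this sum is direct. Both summands do lie in $R$: the monomials $x^iy^j$ trivially, and $S[x,y] = R[x,y]R \subseteq R$ by Proposition \ref{comm}.

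For the spanning step, I would argue by induction on word length. Every element of $R$ is a linear combination of monomial words in $x$ and $y$, so it suffices to reduce each such word $w$ modulo $S[x,y]$ to the form $x^iy^j$. The key device is the straightening relation $yx = xy - [x,y]$: applied to any occurrence $\cdots yx \cdots$ inside $w$, it rewrites $w$ as $\cdots xy \cdots - u[x,y]v$, where the first term has strictly fewer inversions (pairs of positions in which $y$ precedes $x$), and the correction $u[x,y]v$ lies in $R[x,y]R = S[x,y]$ thanks to Proposition \ref{comm}. Iterating until no inversion remains yields $x^iy^j + \sigma$ with $\sigma \in S[x,y]$.

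For directness, I would exploit the fact that diagonal matrices commute. If one evaluates $x \mapsto \mathrm{diag}(a,b)$ and $y \mapsto \mathrm{diag}(c,d)$, then $[x,y]=0$ and so every element of $S[x,y]$ vanishes on this locus. A hypothetical relation $\sum c_{ij} x^i y^j \in S[x,y]$ would therefore force $\mathrm{diag}\bigl(\sum c_{ij} a^i c^j,\ \sum c_{ij} b^i d^j\bigr) = 0$ for all $a,b,c,d \in F$. Since the monomials $a^ic^j$ are linearly independent as elements of $F[a,c]$, all coefficients $c_{ij}$ must vanish, giving $\bigl(\bigoplus_{i,j} Fx^iy^j\bigr) \cap S[x,y] = 0$. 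The same evaluation shows incidentally that the $x^iy^j$ themselves are linearly independent in $R$, so the direct sum on the left is well defined.

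The most delicate point is the spanning step: one must be sure the correction term $u[x,y]v$ really lies in $S[x,y]$ and not merely in the abstract commutator ideal of $R$. This is exactly what Proposition \ref{comm} provides; without that identification the straightening procedure would only land inside $R[x,y]R$, which a priori could be a proper subspace of $S[x,y]$ and would not furnish the stated decomposition. The directness portion, by contrast, is essentially immediate from the diagonal evaluation trick.
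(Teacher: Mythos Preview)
Your argument is correct. The paper itself does not supply a proof here; it simply cites Theorem~(3) of Formanek--Halpin--Li \cite{Foli} and indicates that it follows from Proposition~\ref{comm}. Your straightening-plus-diagonal-specialisation proof is precisely the standard way to fill in that citation, and it uses Proposition~\ref{comm} in exactly the place it is needed.

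One small remark on your final paragraph: the danger you flag is phrased slightly backwards. Without Proposition~\ref{comm} the straightening step already yields $R=\sum Fx^iy^j + R[x,y]R$; the issue is not that $R[x,y]R$ might be a proper subspace of $S[x,y]$ and ``too small'', but rather that without the identification one would not know $S[x,y]\subseteq R$ at all (a priori $S[x,y]$ could strictly contain $R[x,y]R$ and hence contain elements outside $R$). Proposition~\ref{comm} is what guarantees both containments simultaneously, so your use of it is correct even if the motivation you give for it is slightly off.
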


Recall that, given an inclusion $A\subset B$ of rings, the {\em conductor} (of $A$ in $B$) is the maximal ideal $I$ of $A$ which is also an ideal of $B$. In other words it is the set of $a\in A$ with $Ba+aB\subset A$.
\begin{proposition}\label{con}
The commutator ideal  $R[x,y]R$   is the conductor of $R$ in $S$.
\end{proposition}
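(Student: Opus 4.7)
\emph{Plan.} I prove the two-way inclusion $\mathfrak{c}=S[x,y]$, where $\mathfrak{c}$ denotes the conductor of $R$ in $S$.

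For the easy direction $S[x,y]\subseteq\mathfrak{c}$: Proposition \ref{comm} gives $S[x,y]=R[x,y]R\subseteq R$, and since $\mathfrak{c}$ is by definition the largest two-sided $S$-ideal inside $R$, I only need $S[x,y]$ to be a two-sided ideal of $S$. But the identities \eqref{ii} give $[x,y]\cdot x=(tr(x)-x)[x,y]$ and $[x,y]\cdot y=(tr(y)-y)[x,y]$, so right multiplication of $[x,y]$ by any of the $T$-basis elements $\{1,x,y,xy\}$ of $S$ lands inside $S\cdot[x,y]=S[x,y]$; a short check extends this to $S[x,y]\cdot S\subseteq S[x,y]$.

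For the reverse inclusion, take $a\in\mathfrak{c}$ and invoke the vector space decomposition of Theorem \ref{dec} to write $a=p+c$ with $p=\sum c_{ij}x^iy^j\in\bigoplus F x^iy^j$ and $c\in S[x,y]$. Since $c$ already lies in $\mathfrak{c}$ by the previous step, so does $p=a-c$, and I reduce to proving $p=0$. The central element $tr(x)\cdot 1\in T\subseteq S$ must multiply $p$ into $R$, so $tr(x)\cdot p\in R$. The key move is to evaluate both sides at a pair of commuting diagonal matrices $x_0=\mathrm{diag}(a_1,a_2)$, $y_0=\mathrm{diag}(b_1,b_2)$. Any element $r\in R$ is an $F$-polynomial in $x,y$, so on commuting inputs the ordering of letters becomes irrelevant and $r$ collapses into a commutative polynomial $\tilde r\in F[X,Y]$; since diagonal matrices multiply coordinatewise we get $r(x_0,y_0)=\mathrm{diag}(\tilde r(a_1,b_1),\tilde r(a_2,b_2))$. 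Applying this to $p$ and to the $R$-representative of $tr(x)\cdot p$ and comparing top-left entries yields the polynomial identity
\[
(a_1+a_2)\,\tilde p(a_1,b_1)=\tilde q(a_1,b_1)\quad\text{in }F[a_1,a_2,b_1].
\]
The right-hand side is free of $a_2$, which forces $\tilde p(a_1,b_1)\equiv 0$ as a polynomial in $F[a_1,b_1]$, so every coefficient $c_{ij}$ vanishes and $p=0$. Consequently $a=c\in S[x,y]$.

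The main obstacle is the evaluation step: one must know that elements of $R$ restricted to commuting diagonal pairs inherit their diagonal entries through a single commutative polynomial in $F[X,Y]$, while a generic element of $S$ (for example the invariant $tr(x)\cdot 1$) has diagonal entries depending on \emph{both} pairs $(a_1,b_1)$ and $(a_2,b_2)$. This asymmetry in how $R$ and $S$ see the $S_2$-symmetry of the commuting diagonal slice is exactly what rules out any nonzero element of $\bigoplus F x^iy^j$ from belonging to the conductor.
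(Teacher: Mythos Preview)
Your proof is correct and follows the same overall strategy as the paper's: use Proposition~\ref{comm} for the inclusion $S[x,y]\subseteq\mathfrak c$, invoke the decomposition of Theorem~\ref{dec} to reduce the reverse inclusion to showing that no nonzero element $p\in\bigoplus Fx^iy^j$ lies in $\mathfrak c$, and then derive a contradiction from $tr(x)\cdot p\in R$ by a suitable specialization.

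The only difference is in that last specialization step. The paper first uses the bigrading of $\mathfrak c$ to reduce $p$ to a single monomial $x^iy^j$, then sets $y=1$ to obtain $x^i\,tr(x)=\alpha x^{i+1}$ in one generic matrix, a contradiction. You skip the bigrading reduction and instead evaluate on commuting diagonal pairs: the $(1,1)$-entry of any element of $R$ is then a polynomial in $(a_1,b_1)$ alone, whereas $tr(x)\cdot p$ introduces the extra variable $a_2$, forcing $\tilde p\equiv 0$ and hence all $c_{ij}=0$. Your route is slightly more streamlined since it handles the whole combination $p=\sum c_{ij}x^iy^j$ at once and makes explicit the asymmetry between how $R$ and $S$ see the $S_2$-symmetry on the diagonal slice; the paper's route is terser but leans on the bigrading to isolate a single monomial before specializing.
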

\begin{proof}
By the previous proposition $R[x,y]R$ is an ideal in $S$ so it is contained in the conductor. Now the conductor is a bigraded ideal  and $R/R[x,y]R$ has a basis of the ordered monomials $x^iy^j$. So if we had an element in the conductor not in $R[x,y]R$ we would have that one of those monomials is in the conductor. If $x^iy^j$  is in the conductor we have  $x^iy^jtr(x)=f(x,y)$  for some non commutative polynomial. Setting $y=1$  we obtain some identity $ x^i tr(x)=\alpha x^{i+1},\ \alpha\in F$. This implies $tr(x)=\alpha x$  a contradiction.
\end{proof}
The aim of the paper \cite{Foli} was in particular to compute the Poincar\'e series  $\mathcal  P(R)=\sum_{i,j=1}^\infty \dim(R_{i,j})t^is^j$  where $\dim(R_{i,j})$ is the dimension   of $R$ in bidegree $i,j$ with respect to $x,y$. Now we clearly have
$$
\mathcal  P(T)=\frac 1{(1-t)(1-s)(1-t^2) (1-s^2)(1-ts)  } , $$$$ \mathcal  P(S)=( 1+ t+s+ts)\mathcal  P(T).$$\begin{equation}\label{poin}\mathcal  P(R)= \frac 1{(1-t)(1-s)}+ts  \mathcal  P(S) , \ \mathcal  P(Z)=1+(s^2t^2)\mathcal  P(T) . 
\end{equation} Here $Z$ is the center of $R$  and the last Formula follows from Theorem \ref{cd}.

Finally the Poincar\'e series of the free algebra is $(1-s-t)^{-1}$  so the Poincar\'e series of the polynomial identities in 2 variables for $2\times 2$ matrices is, Theorem (10) of \cite{Foli}:
$$ s^2t^2(s+t-st)(1-s)^{-2}(1-t)^{-2} (1-st)^{-1}(1-s-t)^{-1}.  $$ \smallskip

In fact it is important to describe some basic elements in the conductor of the ring $R_n$ of generic $2\times 2$ matrices in $n\geq 3$  variables  inside the corresponding trace ring $S_n$.

\begin{lemma}\label{com}
An element $f(x_1,\ldots,x_n)\in R_n$ is in the conductor of $S_m$ for all $m\geq n$  if and only if, adding  an  extra variable $x_{n+1}$, we have  that $tr(x_{n+1})f(x_1,\ldots,x_n)\in R_{n+1}.$
\end{lemma}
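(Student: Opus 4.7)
The direction ``conductor $\Rightarrow$ identity'' is immediate: since $tr(x_{n+1})\in S_{n+1}$ and the conductor of $R_{n+1}$ in $S_{n+1}$ is by definition an $S_{n+1}$-ideal, any $f$ lying in the conductor satisfies $tr(x_{n+1})\,f\in R_{n+1}$.

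For the converse I assume $tr(x_{n+1})\,f=g(x_1,\ldots,x_{n+1})\in R_{n+1}$ and fix $m\ge n$. Since $T_m$ is central in $S_m$ and $S_m=T_m R_m$, a short verification shows that the conductor of $R_m$ in $S_m$ equals $\{h\in R_m:T_m\,h\subseteq R_m\}$, so the task reduces to proving $T_m\,f\subseteq R_m$. The first step is the substitution $x_{n+1}\mapsto r$ with $r\in R_m$ in the hypothesised identity, yielding
\[
tr(r)\,f=g(x_1,\ldots,x_n,r)\in R_m\qquad\text{for every }r\in R_m.
\]
In other words $f$ belongs to
\[
U_m:=\{h\in R_m\,:\,tr(r)\,h\in R_m\text{ for every }r\in R_m\}.
\]

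The heart of the argument is to show that $U_m$ is stable under left multiplication by every single trace $tr(r_0)$, $r_0\in R_m$. Here the $2\times2$ polarised Cayley--Hamilton identity
\[
tr(a)\,tr(b)\,\one=tr(ab)\,\one+tr(a)\,b+tr(b)\,a-ab-ba,
\]
obtained by linearising $x^2=tr(x)x-\det(x)\one$, is the essential tool. For $u\in U_m$, specialising the identity to $a=r'$, $b=r_0$ and multiplying on the right by $u$ decomposes $tr(r')tr(r_0)u$ into five summands: $tr(r'r_0)u\in R_m$ since $u\in U_m$; by centrality of the trace, $tr(r')r_0u=r_0(tr(r')u)$ and $tr(r_0)r'u=r'(tr(r_0)u)$ lie in $R_m$ for the same reason; finally $r'r_0u$ and $r_0r'u$ are in $R_m$ automatically. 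Hence $tr(r')(tr(r_0)u)\in R_m$ for every $r'$, i.e.\ $tr(r_0)u\in U_m$.

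By the first fundamental theorem of invariant theory $T_m$ is generated as an $F$-algebra by the single traces $tr(r)$ with $r\in R_m$, so iterating the closure just established gives $T_m\cdot U_m\subseteq U_m\subseteq R_m$. Applied to $f\in U_m$ this yields $T_m\,f\subseteq R_m$, so $f$ lies in the conductor of $R_m$ in $S_m$ for every $m\ge n$. The delicate point, and the one that is invisible from naive substitution alone, is precisely the closure step: substitution only supplies single-trace multiples of $f$, and it is the polarised Cayley--Hamilton identity that lets one replace a product $tr(a)tr(b)$ by a combination of a single trace and $R_m$-bimodule operations, so that the induction on the number of traces can actually be carried out.
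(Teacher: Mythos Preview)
Your proof is correct and follows essentially the same route as the paper's. Both arguments hinge on the polarised Cayley--Hamilton identity $tr(a)tr(b)=tr(ab)+tr(a)b+tr(b)a-ab-ba$ to reduce absorption of an arbitrary product $\prod_i tr(z_i)$ to absorption of a single trace; the paper phrases this as a recursive rewriting $\prod_i tr(z_i)=\sum_j A_j\,tr(B_j)$ with $A_j,B_j\in R_m$, while you package the same induction as the closure of the set $U_m$ under multiplication by each $tr(r_0)$.
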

\begin{proof}  
Write the polarized form of the  Cayley--Hamilton identity in the form
$$  tr(z) tr(w)= - zw-wz+tr(z)w+tr(w)z+tr(zw).$$
One has recursively that  $\prod_{i=1}^mtr(z_i)=\sum_j A_jtr(B_j)$ with $A_j,B_j$ explicit non commutative polynomials. 

The algebra $S_m$ is generated over $R_m$ by the elements  $tr(M)$  with $M$ a monomial in the generic matrices (of degree $\leq 3$). Therefore  $f(x_1,\ldots,x_n)\in R_n$ is in the conductor of $S_m$ if it {\em absorbs} the elements $\prod_{i=1}^mtr(z_i)$. But by the previous identity one is reduced to a single trace.
\end{proof}
\begin{remark}\label{gecon0} In Theorem 10.4.8 of \cite{agpr} we generalize to generic matrices of any size proving that 
$f(x_1,\ldots,x_n)\in R_n$ is in the conductor of $S_m$ if and only if  for an  extra variable $x_{n+1}$ we have  $\det(x_{n+1})f(x_1,\ldots,x_n)\in R_{n+1}.$                   
\end{remark}
\begin{corollary}\label{gecon}
1)\quad The polynomial $[[ x_1, x_2] , x_3]$  is in the {\em conductor}  of   the generic matrices inside the trace algebra. 

2)\quad  The central polynomial   $[x,y]^2$ is in the {\em conductor}  of the center of the generic matrices inside the invariant ring, that is for all $m$ we have $\prod_{i=1}^mtr(z_i)[x,y]^2$ is also a non commutative central polynomial.

\end{corollary}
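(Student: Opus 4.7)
For part~(1) the plan is to invoke Lemma~\ref{com}: it suffices to write $tr(x_4)\cdot[[x_1,x_2],x_3]$ as an element of $R_4$. I would apply the polarised Cayley--Hamilton identity $ab+ba=tr(a)b+tr(b)a+tr(ab)-tr(a)tr(b)$ with $a=x_4$ and $b=[[x_1,x_2],x_3]$. Two simplifications are immediate: $tr(b)=0$ since $b$ is a commutator, so the last two terms collapse and one is left with $tr(x_4)\cdot b=x_4 b+b x_4-tr(x_4 b)$. The only non-trivial step is to exhibit $tr(x_4 b)$ itself as a noncommutative polynomial.

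The plan for this remaining piece is the cyclic trick
\[
tr(x_4[[x_1,x_2],x_3])=tr(x_3 x_4[x_1,x_2])-tr(x_4 x_3[x_1,x_2])=tr\bigl([x_3,x_4]\,[x_1,x_2]\bigr),
\]
combined with the fact that both $[x_1,x_2]$ and $[x_3,x_4]$ are commutators, hence traceless, in $M_2$. For traceless $u,v\in M_2$ the Cayley--Hamilton identity specialises to $uv+vu=tr(uv)\cdot\mathbf{1}$; inside the trace algebra the scalar $tr(uv)$ therefore equals the noncommutative polynomial $uv+vu$. Assembling,
\[
tr(x_4)[[x_1,x_2],x_3]=x_4[[x_1,x_2],x_3]+[[x_1,x_2],x_3]x_4-[x_1,x_2][x_3,x_4]-[x_3,x_4][x_1,x_2]\in R_4,
\]
which concludes part~(1) via Lemma~\ref{com}.

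For part~(2) the plan is to reduce to the commutator-ideal structure together with part~(1). The key observation is that $[x,y]^2=[x,y]\cdot[x,y]$ lies in the two-sided commutator ideal $R[x,y]R$; by Proposition~\ref{con} (used in its multivariable form) this ideal sits inside the conductor of $R_{m+2}$ in $S_{m+2}$. Consequently $[x,y]^2\cdot\alpha\in R_{m+2}$ for every $\alpha\in S_{m+2}$, and choosing $\alpha=\prod_{i=1}^m tr(z_i)\in T_{m+2}\subset S_{m+2}$ yields $\prod_i tr(z_i)\cdot[x,y]^2\in R_{m+2}$. Since this element is a product of central scalars of $S$ it is automatically central in $R$, and therefore lies in the centre $Z_{m+2}$; that is, it is a non-commutative central polynomial.

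The main obstacle will be the passage from the $2$-variable Proposition~\ref{con} to the $(m+2)$-variable setting, which requires knowing that $R_{m+2}[x,y]R_{m+2}$ is still an ideal of $S_{m+2}$. A self-contained alternative avoiding this extension is to use Lemma~\ref{com}: it suffices to exhibit $tr(z)[x,y]^2$ as an element of $R_3$ for one new variable $z$, after which the Cayley--Hamilton recursion embedded in the proof of Lemma~\ref{com} handles the general $m$. For this single case one would combine part~(1), applied by substitution to the triple $([x,y],z,[x,y])$ so that $[[[x,y],z],[x,y]]$ is in the conductor, with the anticommutation $[x,y]\,[[x,y],z]+[[x,y],z]\,[x,y]=0$---a consequence of the traceless identity $uv+vu=tr(uv)$ applied to $u=[x,y]$, $v=[[x,y],z]$, together with the cyclic vanishing $tr([x,y]\cdot[[x,y],z])=0$---to assemble the required central polynomial.
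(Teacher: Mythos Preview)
Your argument for part~(1) is correct and self-contained; it yields the explicit expression
\[
tr(x_4)[[x_1,x_2],x_3]=x_4[[x_1,x_2],x_3]+[[x_1,x_2],x_3]x_4-[x_1,x_2][x_3,x_4]-[x_3,x_4][x_1,x_2],
\]
which differs from the paper's cited identity $[z[x_1,x_2]+[x_1,x_2]z,\,x_3]=tr(z)[[x_1,x_2],x_3]$ but serves the same purpose via Lemma~\ref{com}.

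Part~(2), however, has a genuine gap in both branches. The first approach rests on a ``multivariable form'' of Proposition~\ref{con}, but that form is \emph{false}: for $n\ge 3$ the single commutator $[x_i,x_j]$ is not in the conductor of $R_n$ in $S_n$, since $tr(x_k)[x_i,x_j]=x_k[x_i,x_j]+[x_i,x_j]x_k-tr(x_k[x_i,x_j])\cdot 1$ and the degree-$3$ scalar $tr(x_k[x_i,x_j])$ cannot lie in $R_n$ (the minimal central polynomial has degree~$4$). So the commutator ideal is not contained in the conductor once a third variable is present, and mere membership of $[x,y]^2$ in that ideal is insufficient.

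Your second approach makes the right reduction via Lemma~\ref{com} to showing $tr(z)[x,y]^2\in R_3$, but the ingredients you list do not assemble to this. Knowing that $[[[x,y],z],[x,y]]$ is in the conductor controls $tr(w)\cdot[[[x,y],z],[x,y]]$ for a \emph{new} variable $w$, not $tr(z)\cdot[x,y]^2$; and the anticommutation $[x,y]\,[[x,y],z]+[[x,y],z]\,[x,y]=0$ is already an identity inside $R_3$ (no trace appears), so it cannot manufacture a $tr(z)$. What is missing is the observation $[x,y]^2=[x[x,y],y]-x[[x,y],y]$, after which the paper applies the identity $[za+az,\,b]=tr(z)[a,b]$ (valid for any traceless $a$, in particular $a=x[x,y]=[x,xy]$ and $a=[x,y]$) to obtain
\[
tr(z)[x,y]^2=[zx[x,y]+x[x,y]z,\,y]-x[z[x,y]+[x,y]z,\,y]\in R_3.
\]
Equivalently, since part~(1) plus substitution show that every double commutator $[[a,b],c]$ with $a,b,c\in R$ lies in the conductor, one can argue that $[x,y]^2=[[x,xy],y]-x[[x,y],y]$ is an $R$-combination of conductor elements. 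Either route hinges on this decomposition of $[x,y]^2$, which your triple $([x,y],z,[x,y])$ does not supply.
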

\begin{proof}
Recall the basic Formula 9.50 of \cite{agpr}:
\begin{equation}\label{assotr0}
 [z[x_1,x_2]     +   [x_1,x_2] z,x_3 ]=tr(z)  [[ x_1, x_2] , x_3].\end{equation}

With some elementary manipulations one obtains from this 
 
 \begin{equation}\label{assotr}
 tr(z)[x,y]^2=[zx[x,y]     +   x[x,y] z,y ]-x[z[x, y]     +   [x, y] z,y ]=\end{equation}
$$ zx[x,y] y      -yzx[x,y]   -xz[x, y] y  + xyz[x, y] +    [x, y]^2 z .$$

\end{proof}
Recall  also that $[x,y]^2=-\det([x,y])$  is an irreducible polynomial vanishing exactly on the subvariety $V$ of pairs of matrices $x,y$ which are NOT irreducible.  If $c(x,y)$  is any central polynomial in these two variables (with no constant term) then it vanishes on $V$ (Proposition  10.2.2 of \cite{agpr}). Therefore we have 
$c(x,y)=[x,y]^2\alpha,\ \alpha\in T$.  Conversely we have Theorem (5) of \cite{Foli}
\begin{theorem}\label{cd} The center $Z$ of $R$ equals $F+[x,y]^2T$.
\end{theorem}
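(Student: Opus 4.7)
The plan is to prove the stated equality by separate inclusions. The direction $Z\subseteq F+[x,y]^2T$ is already essentially laid out in the paragraph immediately preceding the theorem: given a central element $c$, I would subtract its constant term $c(0)\in F$, reducing to the case that $c$ has no constant term. Proposition 10.2.2 of \cite{agpr} (quoted just above) then forces $c$ to vanish on the subvariety $V$ of non-irreducible pairs. Since $[x,y]^2=-\det([x,y])$ is irreducible and cuts out $V$, it divides $c$ in the polynomial ring on $M_2\times M_2$; the quotient $\alpha$ is $GL_2$-invariant because both numerator and denominator are, hence $\alpha\in T$.

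The reverse inclusion $F+[x,y]^2T\subseteq Z$ is where the real content sits. Centrality is immediate, because $[x,y]^2$ is the scalar matrix $-\det([x,y])\cdot I$ and elements of $T$ evaluate to scalars, so $[x,y]^2\alpha$ commutes with everything. The nontrivial point is membership in $R$ itself, not merely in the trace algebra $S$. Here I would invoke Corollary \ref{gecon}(2): it asserts precisely that $\prod_{i=1}^m tr(z_i)\cdot [x,y]^2\in R$ for any extra generic matrix variables $z_i$. Combining this with Proposition \ref{FolI} and \eqref{invdgm1}, which present $T$ as the polynomial algebra on the five traces of monomials in $x,y$, every $\alpha\in T$ is an $F$-linear combination of products of such traces; specializing the $z_i$ to those monomials yields $[x,y]^2\alpha\in R$.

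The main obstacle is condensed into Corollary \ref{gecon}(2), which rests on the commutator identity \eqref{assotr}: this is where the absorbing phenomenon---that a single factor of $\det([x,y])$ pulls arbitrary products of traces back into the noncommutative polynomial ring---actually gets established. Once that ingredient is granted, the theorem reduces to the divisibility argument on the variety side together with routine bookkeeping about generators of $T$. As a consistency check, the resulting description $Z = F \oplus [x,y]^2 T$ immediately matches the Poincar\'e series $\mathcal{P}(Z)=1+s^2t^2\mathcal{P}(T)$ recorded in \eqref{poin}, since multiplication by the nonzero element $[x,y]^2$ of the domain $R$ is injective and shifts bidegree by $(2,2)$.
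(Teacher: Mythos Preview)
Your proposal is correct and follows essentially the same approach as the paper: the inclusion $Z\subseteq F+[x,y]^2T$ is exactly the divisibility argument recorded in the paragraph preceding the theorem, and the reverse inclusion is obtained by invoking Corollary~\ref{gecon}(2) to see that $[x,y]^2\alpha$ lies in $R$ for every $\alpha\in T$. Your write-up is more explicit than the paper's one-line proof (which only records the second inclusion), and the Poincar\'e series check is a nice addition, but the argument is the same.
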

\begin{proof}
Every  element of the form $c(x,y)=[x,y]^2\alpha,\ \alpha\in T$ can be expressed as a central polynomial by Corollary \ref{gecon} 2).

\end{proof}

Formula \eqref{assotr}  gives, by  Theorem \ref{cca}  \S \ref{bira},  the swap polynomial of  degree 4 in  two  variables:

 \begin{equation}\label{unsw}
P(x,y):= 1\otimes ([x, y]^2 +x[x,y] y )     -y\otimes x[x,y]   
-x\otimes [x, y] y   + xy     \otimes [x, y]  
\end{equation} with value $[x,y]^2\mathtt t$ but quite unbalanced.

Notice that also $P(y,x)=[x,y]^2\mathtt t$ and
 \begin{equation}\label{unsw1}
P(y,x):= 1\otimes ([x, y]^2 -y[x,y] x )     +x\otimes y[x,y]   
+y\otimes [x, y] x   -xy     \otimes [x, y]  
\end{equation} But now from this we can build a balanced swap  polynomial taking the same value as $Q(x,y)$  of Formula  \eqref{qab}.

In fact $Q(x,y)= tr(y)^2 [x,y]  ^2 P(x,y)= tr(y)^2 [x,y]  ^2 P(y,x)$.  The first equals
$$
tr(y)^2[x,y]^4\mathtt t=\begin{matrix}
 tr(y)   [x,y]  ^2\otimes ([x,y]  ^2+x[x,y] y) tr(y)       -[x,y]  ^2y\otimes tr(y)^2x[x,y]\\    - [x,y]  ^2x\otimes tr(y)^2[x, y] y    + tr(y)   [x,y]^2 xy     \otimes tr(y)   [x,y]  \end{matrix} $$ 
similar for the second.
  All terms of these trace tensor polynomials are balanced except the last ones  
  $$tr(y)   [x,y]^2 xy     \otimes tr(y)   [x,y] ,\quad -tr(y)   [x,y]^2yx     \otimes tr(y)   [x,y] ^2$$
  Each of the  balanced terms  containing traces,   both on the left and on the right of $\otimes$ are in the ideal generated by $[x,y]$  so can be written as non commutative polynomials in $x,y$.  As for the last term remark that, developing $[x,y]^2$  as polynomial in the 5 basic generators of the invariants, one has a polynomial of degree $4$ in $x,y$  sum of monomials, each of which can be split as the product of two scalar terms of degree 2.  
   
   Then  we replace $P(x,y)$  with the sum $\frac 12(P(x,y)+P(y,x))$  so that the two unbalanced terms give
   $$ \frac 12tr(y)   [x,y]^3      \otimes tr(y)   [x,y] .$$  In the left term we first replace $ [x,y]^2$ by the polynomial in the basic generators and then move to the right for each monomial one of the two scalar  factors of degree 2. We obtain a balanced trace tensor polynomial of the same type as before  which can be written as a balanced tensor polynomial.  

%   Each of the  balanced terms  containing traces,   both on the left and on the right of $\otimes$ are in the ideal generated by $[x,y]$  so can be written as non commutative polynomials.  As for the last remark that developing $[x,y]^2$  as polynomial in the 5 basic generators one has a polynomial of degree $4$ in $x,y$  sum of monomials, each of which can be split as the product of two terms of degree 2.  Then  we replace $P(x,y)$  with the sum $1/2(P(x,y)+P(y,x))$  so that the two unbalanced terms give
%   $$ tr(y)   [x,y]^3      \otimes tr(y)   [x,y] $$  in the left term we first replace $ [x,y]^2$ by the polynomial in the basic generators and then move to the left for each monomial one of the two factors of degree 2. We obtain a balanced trace tensor polynomial of the same type as before  which can be written as a balanced tensor polynomial.  
 \begin{remark}
I have not verified if,  by applying these formulas, one may obtain exactly formula  \eqref{qab}  or another formula giving the same   swap polynomial up to a tensor polynomial identity  (this will depend on which of the monomials we move to the right and  in which order to apply Formulas \eqref{ii}).   Notice that
$$tr(x)tr(y)[x,y]\otimes [x,y]=$$$$(x[x,y]+[x,y]x)\otimes (y[x,y]+[x,y]y)=(y[x,y]+[x,y]y)\otimes (x[x,y]+[x,y]x ).$$ Therefore the way to express a balanced polynomial is not unique.\end{remark}
 
Of course one may also exchange $x$ and $y$. A   more symmetric swap polynomial of degree $5$ both in $x$ and $y$  is, by the same argument,  $
 tr(x) tr(y) [x,y]^4\mathtt t$.   We will refer to these 3 polynomials as $Q_i(x,y),\ i=1,2,3$.

The authors of \cite{TDN}  have in fact verified that there are no balanced swap polynomials in two variables of degree $<10$  and only these 3 in degree 10.  Later we will see that there is a balanced swap polynomial of degree 8 but in 8 variables, Theorem \ref{swaa}.

\begin{theorem}\label{esss} For every invariant $A=A_1A_2$  product of $2h>0 $  factors of degree 1 giving $A_1$  and $k$ factors  of degree 2 giving $A_2$. If either $k=2\ell$ is even, or $k=2\ell+1$ and $h\geq 2$  we have that  $A[x,y]^4\mathtt t$ is the value of a balanced swap polynomial.

\end{theorem}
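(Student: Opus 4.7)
The strategy generalizes the construction of $Q_1,Q_2,Q_3$ from Theorem \ref{teo}: starting from the unbalanced swap polynomial $P(x,y)$ of \eqref{unsw}, of value $[x,y]^2\mathtt t$, we multiply by the central scalar $A[x,y]^2\in Z$ to obtain a tensor polynomial of value $A[x,y]^4\mathtt t$, and then redistribute this scalar across the two tensor sides of each summand of $P$ in order to produce a balanced polynomial of common per-side total degree $h+k+4$.

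Concretely, for each summand $L_i\otimes R_i$ of $P$ one writes $A[x,y]^2 = M_L^{(i)}\cdot M_R^{(i)}$ so that $M_L^{(i)}L_i$ and $M_R^{(i)}R_i$ each have total degree $h+k+4$. Since every $R_i$ of $P$ already contains $[x,y]$ as a factor, the right-hand side inherits this factor automatically; to give the left-hand side a copy of $[x,y]$ we place the central element $[x,y]^2$ on the left, noting that $[x,y]$ itself is not in the center and so the factor $[x,y]^2$ cannot be split across the tensor. The complementary sub-product $N_L^{(i)} = M_L^{(i)}/[x,y]^2$ of $A$ must then have total degree $h+k-|L_i|$, which takes the three values $h+k$, $h+k-1$, $h+k-2$ as $|L_i|$ ranges over $\{0,1,2\}$. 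With $[x,y]$'s on both sides in place, Proposition \ref{con} together with the identities \eqref{ii} allow us to absorb every remaining trace factor, producing pure non-commutative polynomials on each side.

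The technical heart of the proof is thus the combinatorial statement that for every admissible $A$ there exist sub-products of $A$ of each of these three total degrees. The $2h$ degree-$1$ factors in $A_1$ supply sub-degrees of both parities in $[0,2h]$, while the $k$ degree-$2$ factors in $A_2$ contribute in blocks of size $2$ with bidegrees of type $(2,0)$, $(0,2)$ or $(1,1)$. When $k=2\ell$ is even, the $k$ degree-$2$ factors admit balanced partitions which, together with the $A_1$ factors, realize any required sub-degree for every choice of factor-types in $A_2$. When $k=2\ell+1$ is odd, the lone unpaired degree-$2$ factor introduces a parity defect in the bidegrees achievable from $A_2$; the hypothesis $h\geq 2$ (hence at least four degree-$1$ factors in $A_1$) supplies exactly the flexibility needed to correct this defect and in particular to realize the sub-product of the smallest required degree $h+k-2$ for every admissible choice of factor-types. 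The boundary cases with $h+k<2$ (where $h+k-2<0$) fall under the hypothesis only for $(h,k)=(1,0)$ and are handled directly by the concrete construction of $Q_1,Q_2,Q_3$ in Theorem \ref{teo}.

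The main obstacle is this uniform verification of the splittings across all summands of $P$ and all admissible factorizations of $A$; once the splittings are fixed, the reduction via \eqref{ii} is mechanical, and one obtains a pure tensor polynomial in $F\langle x,y\rangle^{\otimes 2}$ of value $A[x,y]^4\mathtt t$ with each side homogeneous of total degree $h+k+4$, completing the construction of the balanced swap polynomial.
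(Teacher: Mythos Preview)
Your overall strategy is correct and is essentially the paper's: multiply $P(x,y)$ by the central scalar $A[x,y]^2$, redistribute across the two tensor factors to balance, and invoke the conductor property (Proposition~\ref{con}) together with \eqref{ii} to eliminate traces. The paper organizes this in two steps rather than one: it first passes to the balanced block $Q_i=C_1[x,y]^2P(x,y)$ with $C_1=\tr(a)\tr(b)$ (two of the degree-$1$ factors), and then multiplies \emph{both} tensor factors of $Q_i$ by scalars of the \emph{same} degree, splitting the remaining $2h-2$ degree-$1$ and $k$ degree-$2$ factors as $B_1C_2$ and $B_2C_3$. You instead redistribute non-uniformly, summand by summand, directly from $P$.

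Your combinatorial justification, however, is confused and should be rewritten. ``Balanced'' in Definition~\ref{swa} refers only to total degree, not to bidegree in $(x,y)$: inspect $Q(x,y)$ in \eqref{qab}, whose left factors range over bidegrees $(2,3),(1,4),(0,5)$. So the discussion of ``bidegrees of type $(2,0),(0,2),(1,1)$'' and the ``parity defect'' introduced by an unpaired degree-$2$ factor is irrelevant. What you actually need is that $A$ admits sub-products of total degrees $h+k$, $h+k-1$, $h+k-2$; this is immediate, since with $2h\geq 2$ degree-$1$ factors and $k$ degree-$2$ factors the set of sub-product degrees $\{a+2b:0\le a\le 2h,\ 0\le b\le k\}$ is all of $[0,2h+2k]$. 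In particular your direct approach works whenever $h+k\ge 2$ (with $(h,k)=(1,0)$ handled by $Q_1,Q_2,Q_3$), hence also for $h=1$ with $k$ odd, a case the theorem does not claim. The hypothesis $h\ge 2$ for odd $k$ is not needed for your argument; it \emph{is} needed for the paper's, because there one must split the residual $(2h-2)$ degree-$1$ and $k$ degree-$2$ factors into two halves of equal degree $h-1+k$, and with $h=1$, $k$ odd, only even sub-degrees are available.
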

\begin{proof}
First if $k=2\ell$ we split $A_2=B_1B_2$  each with $\ell$ elements   and  $A_1=C_1C_2C_3$  with $C_1=tr(a)tr(b),\ a,b\in\{x,y\}$ a product  of 2 traces and $C_2,C_3$ of the same degree.

Then $C_1[x,y]^2 P(x,y)=Q_i(x,y)$, depending on the variables appearing in the traces.  Then we multiply by  $B_1B_2C_2C_3$  and we distribute $B_1C_2$ on the first factor and $B_2C_3$ on the second. We obtain a balanced polynomial involving traces but again by the same argument all   terms can be expressed as non commutative polynomials. The second case is similar.
\end{proof}

It remains open the question if one can have a balanced swap polynomial which does not satisfy the previous conditions or even just whose value is not divisible by $[x,y]^4$. My guess is NO. This may be related to the fact that    $-[x,y]^4$  is the {\em discriminant}  of the basis  \eqref{basdgm}. 

The algebra $S$ becomes an Azumaya algebra after inverting the element  $[x,y]^2$ and in fact  
\begin{proposition}\label{Pdb}
The polynomial $P(x,y)$  is also an expression of $[x,y]^2\mathtt t$  by using the dual basis to $1,x,y,xy$ as in Formula \eqref{ledua}.
\end{proposition}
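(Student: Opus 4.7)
The plan is to exploit the basis-independence of the Goldman element together with the uniqueness of its expansion in a fixed $T$-basis, rather than computing the dual basis from scratch. First, I would invoke Proposition \ref{FolI} to recall that $S$ is free of rank $4$ over $T$ with basis $\{1,x,y,xy\}$. I would then note that the discriminant of this basis with respect to the trace form $(a,b)\mapsto tr(ab)$ is $-[x,y]^4$ (as already asserted just after Theorem \ref{esss}), so that once $[x,y]^2$ is inverted the trace form becomes non-degenerate and the dual basis $\{1^*,x^*,y^*,(xy)^*\}$ exists and is unique, characterized by $tr(e_i e_j^*)=\delta_{ij}$.

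Next I would juxtapose two expressions for the same element of $S\otimes_T S$. On one side, Formula \eqref{ledua} applied to the basis just recalled gives
$$\mathtt t \;=\; 1\otimes 1^*+x\otimes x^*+y\otimes y^*+xy\otimes(xy)^*,$$
so that $[x,y]^2\mathtt t=\sum_i e_i\otimes [x,y]^2 e_i^*$. On the other side, the derivation leading to \eqref{unsw} via the associativity identity \eqref{assotr} already exhibits $P(x,y)$ as
$$P(x,y)\;=\;1\otimes f_1+x\otimes f_2+y\otimes f_3+xy\otimes f_4,$$
with $f_1=[x,y]^2+x[x,y]y$, $f_2=-[x,y]y$, $f_3=-x[x,y]$, $f_4=[x,y]$, and of value $[x,y]^2\mathtt t$. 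The conclusion then follows at once by the uniqueness of the expansion of any element of $S\otimes_T S$ in the form $\sum_i e_i\otimes g_i$ with $g_i\in S$: matching the two expansions of the single element $[x,y]^2\mathtt t\in S\otimes_T S$ forces $f_i=[x,y]^2 e_i^*$, which is exactly the assertion of the proposition.

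The only nontrivial technical step I foresee is verifying the discriminant formula $\det(tr(e_i e_j))=-[x,y]^4$, which legitimizes the existence of the dual basis over $T[[x,y]^{-2}]$. This is a direct $4\times 4$ Gram determinant computation in the generators \eqref{invdgm1}, using the Cayley--Hamilton reductions \eqref{polCH}. Alternatively, one can bypass the dual-basis formalism altogether and verify the sixteen orthogonality relations $tr(e_i f_j)=[x,y]^2\delta_{ij}$ directly; the only non-routine case there is $tr(x[x,y]y)=\det([x,y])$, which follows from the trace-zero Cayley--Hamilton identity $[x,y]^2=-\det([x,y])$ combined with cyclic invariance of the trace. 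I expect this verification to be the main---and only---obstacle, though it is essentially bookkeeping.
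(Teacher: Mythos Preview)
Your argument is correct and takes a genuinely different route from the paper's. The paper proceeds by direct linear algebra: it writes down the $4\times 4$ Gram matrix $D=(tr(e_ie_j))$ of the basis $1,x,y,xy$, records $\det(D)=-[x,y]^4$, computes the cofactor matrix and observes it is divisible by $[x,y]^2$, and thereby writes out the scaled dual basis $[x,y]^2e_i^*$ explicitly as $[x,y]^2+x[x,y]y,\ -[x,y]y,\ -x[x,y],\ [x,y]$, which visibly coincide with the right-hand factors of $P(x,y)$.

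Your approach instead leverages the identity $P(x,y)=[x,y]^2\mathtt t$ already established from \eqref{assotr}, together with the basis-independent formula \eqref{ledua} for $\mathtt t$, and then invokes uniqueness of the expansion $\sum e_i\otimes g_i$ in the free $T$-module $S\otimes_T S$ (or its localization). This is cleaner and avoids the cofactor computation entirely. One small remark: for your argument the discriminant value $-[x,y]^4$ is not actually needed; non-degeneracy of the trace form over $\mathrm{Frac}(T)$ (hence existence and uniqueness of the dual basis there) already follows from the fact that $S\otimes_T\mathrm{Frac}(T)$ is central simple. What your argument buys is economy; what the paper's buys is the explicit dual basis and the discriminant formula themselves, which are of independent interest and are reused elsewhere in the section. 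Your alternative of checking the sixteen relations $tr(e_if_j)=[x,y]^2\delta_{ij}$ directly is also valid and sits between the two approaches.
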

\begin{proof}

In fact, recall from page 374 of \cite{agpr} the   matrix of the trace form, of the basis  \eqref{basdgm},  is: \begin{equation}
D=\begin{vmatrix}
tr(1)&tr(x)&tr(y)&tr(xy)\\
tr(x)&tr(x^2)&tr(xy)&tr(x^2y)\\
tr(y)&tr(yx)&tr( y^2)&tr( xy^2)\\
tr(xy)&tr(x^2y )&tr(xy^2)&tr((xy)^2)
\end{vmatrix} ,\quad \det(D)=-[x,y]^4\end{equation}

  One can compute that the cofactor matrix $
      \bar \Lambda$ of $D$ is divisible by $[x,y]^2$ so is
$ \bar \Lambda=[x,y]^2\Lambda$. 
Setting $$= 
    \det(x)  tr(y) ^2 - 
    tr(x   y) tr(x) tr(y) + tr(x)^2\det[y)-2\det(x)  \det[y) +
      tr(x   y) ^2 := A$$
$$     \Lambda=\begin{vmatrix}
 2A
   & -tr (x) \det (y) & -\det (x) tr (y) & 
  tr (x) tr (y) - tr (xy)\\-tr (x) \det (y) & 2  \det (y) & 
  tr (xy) & -tr (y)\\-\det (x) tr (y) & tr (xy) & 
  2 \det (x) & -tr (x)\\tr (x) tr (y) - tr (xy) & -tr (y) & -tr (x) & 
  2 
\end{vmatrix}$$ 
From this one has that the dual basis, for the trace form of   the basis  \eqref{basdgm}, up to the scalar $[x,y]^2$ is
$$ ( [x,y]^2+ x[ x,y]y),\ -   [x,y] y,\  -  x[x,y],\ [x,y]$$
Then $[x,y]^2\mathtt t$ is given by the dual bases expression \eqref{ledua}
  $$= 1\otimes ( [x,y]^2+ x[ x,y]y)-x\otimes   [x,y] y -y\otimes x[x,y]+xy\otimes [x,y]$$
 is again the polynomial   $P(x,y)$ of Formula \eqref{unsw}.
\end{proof}
   
%The expression of a swap polynomial is not unique but one has that  a tensor polynomial  $\sum_ia_i\otimes b_i$  vanishes if and only if for an extra variable $z$  one has $\sum_ia_iz b_i$ is a polynomial identity.

%Furthermore  for a given swap polynomial $\sum_ia_i\otimes  b_i=\alpha\mathtt t$ we have that $(\sum_ia_i\otimes  b_i)^2=\sum_{i,j}a_ia_j\otimes b_ib_j=\alpha^21$ is a scalar multiple of $1$. In particular  
%$$ \sum_{i,j}a_ia_jz b_ib_j=\alpha^2z$$
%$$ \sum_{i,j}a_ia_jz b_ib_j=\sum_j(\sum_ia_i(a_jz)b_i)b_j=\alpha\sum_j tr(a_jz) b_j $$
%$$\implies \sum_j tr(a_jz) b_j=\alpha\cdot  z  $$
\medskip 
  
  It remains to exhibit with an explicit formula a balanced swap polynomial  $g(\xi)=f(\xi)(1,2)$ for all $d$.     An explicit construction is performed in \S \ref{Cap} and \S \ref{swap}.

 \section{Swap polynomials}

A general approach  to  balanced swap polynomials is the following.
Start from any  swap trace  polynomial  $$G:=\sum_iA_i\otimes B_i=f(x_1,\ldots,x_n)\mathtt t$$  with $A_i, B_i $ trace polynomials on $d\times d$ matrices,  $f(x_1,\ldots,x_n)$ a scalar invariant function. This can be for instance constructed, for any $n\geq 2$,  by taking $n^2$  monomials $A_i$  in the generic matrices with $\Delta:=\det(tr(A_iA_j))\neq 0$.  Solving for the dual basis (by Cramer's  rule) $B_i=\sum_{i=1}^{d^2} x_{i,j} A_i$ the equations
$$\Delta \delta^i_j=tr(A_jB_i)=  \sum_{i=1}^{d^2} x_{i,j}tr(A_j A_i)\stackrel{\eqref{ledua}}\implies \sum_{i=1}^{n^2}A_i\otimes B_i=\Delta \mathtt t. $$  

One sees first that the {\em homogeneous components} of  $G$ of    degree  $h $   (i.e. $\deg A_i+\deg B_i=h$) are  still  swap trace  polynomials. It is easy to construct from this a  balanced swap polynomial. First by multiplying by a suitable product $\prod_Itr(z_i)$. This can be distributed in each of the two factors to make them balanced.

Next one has  special central polynomials $u(x)$ which are in the conductor  of the inclusion of the ring of central polynomial  inside the ring of invariants. That is if $f(x)$ is any invariant, i.e. a polynomial in the traces we have $u(x)f(x)$ is a central polynomial (traces disappear). So taking two such elements of the same degree one   has 
$$F(x):=\sum_i u_1(x)   A_i\otimes u_2(x) B_i=u_1(x)u_2(x)  f(x)(1,2)$$ and $F(x)$ is a balanced swap polynomial. Moreover $u_1(x)u_2(x)  f(x)$  is a central polynomial.\smallskip

\subsection{Dual bases and  Capelli polynomials\label{Cap}}

The previous general procedure  is more effective using the approach to central polynomials of Razmyslov, \cite{razmyslov2} i.e. {\em antisymmetry} as follows.  

Given a noncommutative polynomial $f$ in several variables which is linear in a given variable $x_i$, write it in the form
$f=\sum_ka_kx_ib_k$. Consider it as a function on matrices,
set $  f_i:=\sum_kb_ka_k$ we then have:
$$ tr(f)=tr(x_if_i).$$

If $f=\sum_ka_kx_ib_k$   depends linearly also upon another variable $x_j$ and   it  changes sign by exchange of $x_i,x_j$, then when we substitute in $f$,
$x_i$ with $x_j$ we get $\sum_ka_kx_jb_k=0$ and  we also deduce:
$$ tr(x_jf_i)=tr(\sum_ka_kx_jb_k)=0.$$

Therefore if $f(x_1,\ldots,x_{n^2},y)$ is a noncommutative polynomial   which is linear in each variable $x_i$ and alternating in 
$x_1,\ldots,x_{n^2}$ and $tr(f(x_1,\ldots,x_{n^2},y)) $ is different from 0 we have $tr(x_jf_i)=\delta^i_jtr(f(x_1,\ldots,x_{n^2},y)) $. Thus we have, up to the scalar function $tr(f(x_1,\ldots,x_{n^2},y)) $ the polynomials $f_i$ form a dual basis of the $n^2$ variables $x_j$. From Formula   \eqref{ledua} we  also have that
$$\mathtt t=tr(f(x_1,\ldots,x_{n^2},y))^{-1}\sum_{i=1}^{n^2}x_i\otimes f_i,\quad\text{the Goldman element} $$        
\begin{equation}\label{adua0}
\implies h(x,y):=\sum_{i=1}^{n^2} x_i y_0  f_i(x_1,\ldots,\check x_i,\ldots,x_{n^2},y)=tr(y_0) tr(f(x ,y))  .
\end{equation} Thus we have a central polynomial $h(x,y)$ of degree $\deg(f)+1 $  and, by Formula  \eqref{ledua}, the swap polynomial:
\begin{equation}\label{adua}
H:=\sum_{i=1}^{n^2} x_i\otimes  f_i(x_1,\ldots,\check x_i,\ldots,x_{n^2},y)=tr(f(x_1,\ldots,x_{n^2},y)) \mathtt t.
\end{equation}
This $H$ is of course unbalanced, but if $m$ is the degree of $f$ and we choose any central polynomial $c$ of degree $m-2$  one has that 
$$c\cdot H= \sum_{i=1}^{n^2} c\cdot x_i\otimes  f_i(x_1,\ldots,\check x_i,\ldots,x_{n^2},y)=c\cdot tr(f(x_1,\ldots,x_{n^2},y)) \mathtt t$$ is balanced.   At worst, if we cannot find such a central polynomial,  replacing $f$  with $\bar f:=f uzw$  of degree $m+3$ we may take as $c=h(x,y)$ of degree $m+1=(m+3)-2$. \smallskip

The simplest $f$  satisfying the previous conditions is 
 the {\em Capelli polynomial} $C_{n^2}$, of degree $2n^2-1$. Following Razmyslov\index{Razmyslov} for each $m$ one  sets
 $$C_m(x_1,x_2,\dots,x_{m};y_1,y_2,\dots,y_{m-1} ) $$
\begin{equation}\label{Capelli}:=\sum_{\sigma\in
S_{m}}\epsilon_\sigma  x_{\sigma(1)}y_1x_{\sigma(2)}y_2\dots
x_{\sigma(m-1)}y_{m-1}x_{\sigma(m)}  .\end{equation}
In fact this is only an analogy of the classical Capelli identity, which is instead an identity of differential operators (see \cite{P7}).

Thus we have an explicit central polynomial of degree $2n^2$  and an explicit balanced swap polynomial of degree  $4n^2+2$.

%
%By applying the previous procedure, if one can find a central polynomial of degree $2n^2-3$ one obtains some swap polynomials of degree $4n^2-2$.  At any rate, setting $f=C_{n^2}(x,y)y_{n^2}y_{n^2+1}y_{n^2+2}$, a polynomial of degree $2n^2+2$ the polynomial 
% \begin{equation}\label{adua1}
% \sum_{i=1}^{n^2} \widehat C_{n^2}(x,y,z)x_i\otimes  f_i(x_1,\ldots,\check x_i,\ldots,x_{n^2},y)  =\widehat C_{n^2}(x,y,z)tr(f(x_1,\ldots,x_{n^2},y)) \mathtt t.
%\end{equation} is a balanced swap polynomial of degree $4 n^2+2$.

 We will see later, in \S \ref{swap}, how to lower this degree and at the same time constructing more canonical swap polynomials.

%
%\subsection{Weak identities and swap  polynomials}
%Notice that, if $n$ is invertible in $F$,   any matrix $x$ can be written in the form  $a+n^{-1}tr(a)$  with $tr(a)=0$  so that
%if $ \sum_j a_jx_ib_j$ is a weak identity for $x_i$ one has that  $  \sum_j a_jx_ib_j= n^{-1}tr(x_i)\sum_j a_j b_j.$  By hypothesis $\sum_j a_j b_j$ is not a polynomial identity hence it gives a non zero element in the division ring $D_{k,n}$.  Thus in $D_{k,n}$ we have $ n \sum_j a_jx_ib_j( \sum_j a_j b_j)^{-1}=  tr(x_i)$ in other words:
%
%\begin{proposition}\label{ssw1}
%The element $$n \sum_ia_i\otimes b_i( \sum_ia_i b_i)^{-1}= n( \sum_ia_i b_i)^{-1} \sum_ia_i\otimes b_i =\mathtt t$$ and $ \sum_ia_i\otimes b_i= ( \sum_ia_i b_i) \mathtt t$ is a swap polynomial if $\sum_ia_i b_i$ is a central polynomial.  
%\end{proposition}
%From Theorem \ref{weak} we have that $ \sum_j b_j\zeta a_j$ is a central polynomial so
%$$\sum_j  b_j \zeta a_j=n^{-1}tr(\sum_j  b_j \zeta a_j)=n^{-1}tr(\zeta \sum_j   a_j b_j ).$$ 
%If $\sum_ia_i b_i$ is also a central polynomial then $\sum_ia_i b_i= \sum_ib_i a_i$.\medskip

  \subsection{Antisymmetry}Let us recall some basic facts. Denote for simplicity by $G=GL(n,F)$ the group of invertible matrices which acts on $n\times n$ matrices by conjugation.\begin{theorem}
The invariants of $n \times n$ matrices are generated by elements $\tr(M)$ where $M$ are monomials 
(of degree~$\leq n^2$ by Razmyslov). 
\end{theorem}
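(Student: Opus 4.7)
The plan is to reduce to the multilinear case by polarization and then invoke Schur--Weyl duality (the First Fundamental Theorem for $GL(V)$). Working in characteristic $0$, the ring of polynomial invariants on $M_n(F)^k$ is the direct sum of its multihomogeneous components, and by a standard polarization/restitution argument each multihomogeneous invariant is determined by a multilinear invariant in a larger number of matrix variables. So it suffices to prove that every multilinear $GL(n,F)$-invariant function of $k$ matrices $X_1,\dots,X_k$ is a linear combination of products of traces of monomials in the $X_i$.

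Next I would identify a multilinear invariant of degree $k$ with an element of $(M_n(F)^{\otimes k})^{\ast G}$, equivalently a $G$-equivariant endomorphism of $V^{\otimes k}$ where $V=F^n$, using the isomorphism $M_n(F)\cong V\otimes V^{\ast}$ of $GL(V)$-modules. Schur--Weyl duality then gives that $\mathrm{End}_G(V^{\otimes k})$ is spanned by the image of the symmetric group $S_k$ acting by permutation of tensor factors. Pairing a permutation $\sigma\in S_k$ with $X_1\otimes\cdots\otimes X_k$ via the trace yields
\begin{equation*}
\mathrm{tr}_{V^{\otimes k}}\bigl(\sigma\circ (X_1\otimes\cdots\otimes X_k)\bigr)=\prod_{c}\mathrm{tr}(X_{i_1}X_{i_2}\cdots X_{i_\ell}),
\end{equation*}
where the product runs over the cycles $c=(i_1,\ldots,i_\ell)$ of $\sigma$. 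This is exactly a product of traces of monomials, which establishes the generation statement.

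The main step to nail down is the equivalence between $G$-invariant multilinear forms on $M_n(F)^k$ and the action of $S_k$ on $V^{\otimes k}$, which is the classical First Fundamental Theorem. The identification of the resulting invariant with a product of traces indexed by cycles is a direct computation, since a cyclic permutation $(i_1,\ldots,i_\ell)$ acting on $V^{\otimes\ell}$ has trace against $X_{i_1}\otimes\cdots\otimes X_{i_\ell}$ equal to $\mathrm{tr}(X_{i_1}\cdots X_{i_\ell})$, and a general permutation factors into disjoint cycles acting independently on the corresponding tensor blocks.

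For the parenthetical degree bound, I would invoke Razmyslov's theorem: since $M_n(F)$ satisfies a non-trivial polynomial identity (e.g.\ the Amitsur--Levitzki standard identity of degree $2n$), the corresponding trace identities produced by linearizing the Cayley--Hamilton polynomial allow one to rewrite $\mathrm{tr}(M)$ for any monomial $M$ of degree $>n^2$ as a polynomial in traces of monomials of strictly smaller degree; iterating, every generator can be replaced by traces of monomials of degree $\leq n^2$. The serious obstacle in this last step is establishing the sharp bound $n^2$ rather than, say, a cruder bound from Amitsur--Levitzki alone; this requires Razmyslov's argument via the trace form of the Cayley--Hamilton identity and is best cited from the literature.
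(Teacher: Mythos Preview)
The paper does not prove this theorem at all: it is stated under the heading ``Let us recall some basic facts'' and is simply quoted as the classical First Fundamental Theorem for matrix invariants (originally due to Procesi and, independently, Razmyslov), with the degree bound attributed to Razmyslov. There is therefore no proof in the paper to compare against.

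Your sketch is the standard argument and is correct in outline: polarize to reduce to multilinear invariants, identify these via $M_n(F)\cong V\otimes V^{\ast}$ with $G$-equivariant endomorphisms of $V^{\otimes k}$, apply Schur--Weyl duality to get the span of $S_k$, and compute $\tr_{V^{\otimes k}}(\sigma\circ(X_1\otimes\cdots\otimes X_k))$ as the product of traces over the cycles of $\sigma$. This is precisely Procesi's original proof of the FFT for matrix invariants, so your approach is not merely compatible with the paper but is the very result the paper is recalling. The parenthetical $n^2$ bound is, as you say, a separate and deeper input; citing Razmyslov is exactly what the paper does.
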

Among these invariants the ones that are multilinear and alternating have a very special structure.\smallskip

In fact these invariants have an {\em exterior} multiplication. The algebra of these invariants, 
under exterior  multiplication,   is the algebra of invariant multilinear alternating 
functions $(\bigwedge M_d(F)^*)^G$. 

In turn this algebra can be identified to the cohomology of the unitary group. 
As all such cohomology algebras it is a Hopf algebra and  by Hopf's Theorem  it  is the exterior 
algebra generated by the  primitive elements. 
 
The  primitive elements  of   $(\bigwedge M_d(F)^* )^G$ are~\cite{kostant}:

\begin{equation}\label{prime}
T_{2i-1}=T_{2i-1}(x_1,\ldots,x_{2i-1}):\stackrel{\eqref{st}}=\tr (St_{2i-1}(x_1,\ldots,x_{2i-1}))\,.
\end{equation} 
Recall the {\em standard polynomial} in $k$ variables is defined as
\begin{equation}\label{st}
 St_k(X) := \sum_{\sigma \in S_k} \epsilon_\sigma x_{\sigma(1)} \cdots x_{\sigma(k)}=Alt_Xx_1x_2\ldots x_k.
\end{equation}
$S_k$ denotes the symmetric group and $ \epsilon_\sigma $ the sign of the permutation $\sigma$.

In particular, since these elements generate an exterior algebra, 
a product of elements $T_i$ is non zero if and only if the $T_i$ involved are all distinct. Given $k$ distinct $T_i$ their product depends on the order only 
up to  sign.

The $2^n$ different products form a basis of  $(\bigwedge M_d(F)^* )^G$.   
In dimension $d^2$ the only non-zero product of these elements containing $d^2$ variables is
 \begin{equation}\label{ILTD}
\mathcal T_d(x_1,x_2,\ldots,x_{d^2})=T_1\wedge T_3\wedge T_5\wedge \cdots \wedge T_{2d-1}.
\end{equation} Notice  that $\tr(St_{2i }(x_1,\ldots,x_{2i}))=0,\ \forall i$.\smallskip

As a consequence, we have:
\begin{proposition}\label{mmu}
Any multilinear  antisymmetric function of  $x_1,\ldots,x_{d^2 }$   is a multiple of \
$T_1\wedge T_3\wedge T_5\wedge \cdots \wedge T_{2d-1}$.
\end{proposition}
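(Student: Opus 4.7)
The plan is to identify the space of multilinear antisymmetric (and implicitly $G$-invariant) functions in $d^2$ matrix arguments with the top homogeneous component of the Hopf algebra $(\bigwedge M_d(F)^*)^G$ just described, and then read off the conclusion from the exterior-algebra structure on its primitive generators.

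First, a multilinear alternating $G$-invariant function on $(M_d(F))^{d^2}$ sits, by construction, in the homogeneous piece $(\bigwedge^{d^2} M_d(F)^*)^G$. By Hopf's theorem and Kostant's identification of the primitives recalled above, this graded algebra is the exterior algebra on the generators $T_1,T_3,\ldots,T_{2d-1}$, with $T_{2i-1}$ of (exterior) degree $2i-1$. In particular, a linear basis of $(\bigwedge M_d(F)^*)^G$ is given by the $2^d$ products $T_{i_1}\wedge\cdots\wedge T_{i_r}$ over subsets $\{i_1<\cdots<i_r\}\subset\{1,3,\ldots,2d-1\}$, and the degree of such a product equals the sum $i_1+\cdots+i_r$.

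Next, I would invoke the elementary arithmetic identity
\[
\sum_{i=1}^{d}(2i-1)=d^2,
\]
combined with the observation that every proper subset of $\{1,3,\ldots,2d-1\}$ sums to strictly less than $d^2$. Therefore, among the basis products above, the unique one lying in degree $d^2$ is the full wedge $\mathcal T_d = T_1\wedge T_3\wedge\cdots\wedge T_{2d-1}$. Consequently $(\bigwedge^{d^2} M_d(F)^*)^G$ is one-dimensional, spanned by $\mathcal T_d$, and every multilinear antisymmetric invariant of $d^2$ variables is a scalar multiple of $\mathcal T_d$, as claimed.

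There is no serious obstacle, since all the heavy input (Hopf's theorem giving the exterior-algebra structure, together with the explicit primitives $T_{2i-1}$) has already been imported from the preceding paragraphs. The only work left is the trivial subset-sum bookkeeping that isolates the top degree, plus the remark that $\mathcal T_d$ is non-zero, which is guaranteed by the fact that the $T_{2i-1}$ are algebraically independent generators of the exterior algebra.
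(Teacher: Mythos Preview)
Your proposal is correct and follows essentially the same approach as the paper: the paper states the proposition as an immediate consequence of the preceding paragraph, which records that $(\bigwedge M_d(F)^*)^G$ is the exterior algebra on $T_1,T_3,\ldots,T_{2d-1}$ and that the only basis product of total degree $d^2$ is the full wedge $\mathcal T_d$. Your write-up makes explicit the subset-sum observation and the non-vanishing of $\mathcal T_d$, but this is just spelling out what the paper leaves implicit.
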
 

\begin{remark}\label{Iin} 
The function $\det( x_1,\ldots, x_{d^2})$  is an alternating invariant of matrices,
so it must have an expression as in Formula~\eqref{ILTD}.  
In fact the computable integer constant is known up to a sign~\cite{formanek2}:
\begin{equation}\label{costdis}\mathcal T_d
(x_1, \dots, x_{d^2})=\mathcal{C}_d\det( x_1,\ldots, x_{d^2}),\quad 
\mathcal C_d:=\pm \frac{1!3!5!\cdots (2d-1)!}{1!2!\cdots (d-1)!}\in \mathbb Z.
\end{equation}
 \begin{equation}\label{ilfa}
\pm\{1, 6, 360, 302400, 4572288000, 1520925880320000,\ldots \}
\end{equation}
\end{remark}
\begin{proposition}\label{crit}
\begin{enumerate}\item There is an element $A_G\in M_d(F)^{\otimes n}$ invariant under the diagonal action of the linear group $G=GL(d)$ such that 
\begin{equation}\label{JG}
G(X_1,\ldots,X_{k})=\prod_{i=1}^k \mathcal T_d
(X_i )A_G,\quad A_G\in \Sigma_n(F^d)\subset M_d(F)^{\otimes n} . 
\end{equation}\item    We have $$tr(\sigma\circ G(X_1,\ldots,X_{k})=\prod_{i=1}^k \mathcal T_d
(X_i )tr(\sigma\circ A_G),\ \forall \sigma\in S_n.$$ 
\item If $n=2$ then $G(X_1,\ldots,X_{k})\neq 0$ is a  swap polynomial if and only if:  \end{enumerate}
\begin{equation}\label{cott}
d\cdot tr(G(X_1,\ldots,X_{k}))=  tr((1,2)G(X_1,\ldots,X_{k}))\iff d\cdot tr(A_G )=  tr((1,2)A_G).
\end{equation}
\end{proposition}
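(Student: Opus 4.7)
The plan is to prove the three parts in order, using Proposition \ref{mmu} for the factorization, Schur--Weyl duality for the invariance, and a direct trace computation for (3).

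For Part (1), fix any evaluation for $X_2,\ldots,X_k$; then $G$ becomes a multilinear alternating function $M_d(F)^{d^2}\to M_d(F)^{\otimes n}$ in $X_1$. The space of such functions is one-dimensional over $M_d(F)^{\otimes n}$, spanned by a chosen volume form $\omega$ on $M_d(F)$. By Proposition \ref{mmu} and Formula \eqref{costdis}, $\omega$ agrees with $\mathcal{C}_d^{-1}\mathcal{T}_d$ up to the identification $\mathcal{T}_d=\mathcal{C}_d\det$. Iterating the factorization over $X_1,\ldots,X_k$ yields
\[ G(X_1,\ldots,X_k) = \prod_{i=1}^{k}\mathcal{T}_d(X_i)\cdot A_G \]
for a unique $A_G \in M_d(F)^{\otimes n}$. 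Now the evaluation of any tensor polynomial is $GL(d)$-equivariant: simultaneous conjugation of every matrix variable by $g$ conjugates the output by $g^{\otimes n}$. Each $\mathcal{T}_d(X_i)$ is a $GL(d)$-invariant scalar, so $A_G$ must itself be invariant under the diagonal adjoint action. By Schur--Weyl duality, recalled just above the proposition, the invariants in $M_d(F)^{\otimes n}$ are precisely $\Sigma_n(F^d)$, which proves (1).

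Part (2) is then immediate: apply the linear functional $\tr(\sigma\circ{-})$ to both sides of (1), pulling the commutative factor $\prod_i\mathcal{T}_d(X_i)$ outside. For Part (3), expand $A_G = \alpha\cdot 1 + \beta\cdot(1,2)$ in the two-dimensional algebra $\Sigma_2(F^d)$. Since $G\neq 0$ forces $A_G\neq 0$ and $\prod_i\mathcal{T}_d(X_i)$ is a nonzero function on matrices, $G$ is a swap polynomial (i.e.\ a scalar multiple of $(1,2)$) exactly when $\alpha = 0$. Direct computation gives $\tr(1)=d^2$ and $\tr((1,2))=d$ on $(F^d)^{\otimes 2}$, hence $\tr(A_G)=\alpha d^2+\beta d$ and $\tr((1,2)A_G)=\alpha d+\beta d^2$, so
\[ d\cdot\tr(A_G)-\tr((1,2)A_G) = \alpha\,d(d^2-1). \]
Assuming $d\ge 2$, this vanishes iff $\alpha=0$, giving the second equivalence in (3); the first equivalence follows by combining with Part (2) applied to $\sigma\in\{1,(1,2)\}$.

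The only conceptually nontrivial point is the transfer of $GL(d)$-equivariance from $G$ to the factor $A_G$, which rests on the observation that evaluation of a tensor polynomial is automatically equivariant under diagonal conjugation; once this is unpacked, the rest reduces either to the one-dimensionality of top-degree alternating forms on $M_d(F)$ or to an elementary calculation in the two-dimensional algebra $F\cdot 1+F\cdot(1,2)$.
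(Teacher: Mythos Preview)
Your proof is correct and follows essentially the same approach as the paper. The paper dispatches parts (1) and (2) with the phrase ``the first two parts are clear'' (implicitly relying on the one-dimensionality of top alternating forms and Schur--Weyl duality, as you spell out), and for part (3) performs exactly the same trace computation you give, writing $A_G=a\cdot\mathrm{Id}+b(1,2)$ and checking that $d\cdot\tr(A_G)=\tr((1,2)A_G)$ is equivalent to $a=0$.
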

\begin{proof} 
The first two parts are clear. As for   3. we have $A_G=a\cdot Id+b(1,2)$ and  $G(X_1,\ldots,X_{k})$ is a  swap polynomial if and only if $a=0$  (and $b\neq 0$):
$$  tr(A_G )= tr(a\cdot Id+b(1,2))=  a\cdot  d^2+b\cdot d  $$$$ tr((1,2)A_G)= tr(a(1,2) +b\cdot Id)=  a  \cdot  d+b\cdot d^2 $$ so  $$ a  \cdot  d+b\cdot d^2 = d(a\cdot  d^2+b\cdot d)\iff a=0.$$
 \end{proof}

  Let us also remark:
  \begin{remark}\label{qzer}
$$tr(a\cdot Id+b(1,2))=0 \iff a=-d\cdot b,\quad tr((1,2)(a\cdot Id+b(1,2))=0 \iff b=-d\cdot a.$$
\end{remark} Of course for a 2-tensor valued such polynomial $G(X,Y)$ we have that $G(X,Y)=0$ if and only if $ tr(G(X,Y))=tr((1,2)G(X,Y))=0$.

A method to compute $A_G $ is by using the transform  $\Phi_d$  introduced by Collins \cite{Coll} and the so called Weingarten function, which with a different terminology was already studied by  Formanek \cite{formanek2}.
 \begin{equation}\label{CW}
\Phi_d(A_G):=\sum_{\sigma\in S_n}\tr(\sigma\circ A_G)\sigma^{-1}, \ W\!g(n,d):=\Phi_d(\one)^{-1} \implies A_G=\Phi_d(A_G)W\!g(n,d).
\end{equation}
\begin{remark}\label{Univ}
One has that the element $\Phi_d(\one)$ is invertible in  the center of  $ \Sigma_n(F^d)$. Hence  $W\!g(n,d)= \sum_{\mu\vdash d} a_\mu c_\mu\in \Sigma_n(F^d)$    is  the image of a class function ($c_\mu$ denotes the sum over the conjugacy class relative to the partition $\mu$).  Of course the expression is unique only if  $d\geq n$, in this case one may interpret the Weingarten function as a function of $\sigma\in S_n$ depending on $d$. 
\end{remark}

An explicit formula through characters is, see \cite{Coll}, or \cite{P8}
 \begin{equation}\label{wein}
a_\mu=\sum_{\lambda\vdash n,\ ht(\lambda)\leq d} \frac{ \chi^\lambda(1)^2\chi_\lambda(\mu)}{s_{\lambda,d}(1)} .
\end{equation}Where  $\mu$ is the cycle partition of $\sigma$,  $\chi_\lambda(\mu)$ the character of $\sigma$ in the irreducible representation of $S_n$  corresponding  to $\lambda$ and  $s_{\lambda,d}(1)$ is the dimension of the corresponding irreducible representation of $GL(d,F)$.

\begin{remark}\label{pone}
It is known \cite{Novak}   (see also \cite{P8}  Theorem  1. 29)   that the function $a_\mu$     is always nonzero and positive if $\mu$ is the cycle partition of an even permutation and negative for odd permutations.
\end{remark}

For a    computation using Mathematica of  the   list   $ d!^2\sum_{\mu\vdash d} a_\mu c_\mu$ and $d\leq 8$, for $n=d$  see \cite{P8}.

\begin{remark}\label{prs}
In Proposition 16 of \cite{hp}  we have shown  that, if we distribute the $d^2$  variables $Y$ in $k$ monomials $n_i(Y)$ each of degree $h_i$  (with $\sum_ih_{i=1}^k=d^2$) then $Alt_Y  n_1(Y)\otimes \dots\otimes  n_k(Y))$ is 0 unless  the $h_i$ are a permutation of a refinement of the sequence $\delta_d:=1,3,\ldots, 2d-1$. In this case we have $tr(\sigma^{-1}G_d(Y_1,\ldots,Y_{d^2})=0$  unless $\sigma $  {\em glues together} the monomials so to recover the partition $\delta_d$. In this  case  $tr( \sigma^{-1}G_d(Y_1,\ldots,Y_{d^2}))=\pm \mathcal T_d(Y) .$ The sign is that of the permutation that $\sigma$ induces on the subset of the indices $i$ for  which $h_i$ is odd.

In particular if $k=d$ this means that   the $h_i$ are a permutation of  the sequence $\delta_d$.  For the sequence $\delta_d$    it follows $$n_i(Y)=y_{(i-1)^2+1}\ldots y_{i^2} ,\quad G_d(Y_1,\ldots,Y_{d^2}) :=  Alt_Y(n_1(Y)\otimes \dots\otimes  n_d(Y))$$  \begin{equation}\label{wstpw}
tr(\sigma\circ G_d(Y_1,\ldots,Y_{d^2}))=\begin{cases}
 \mathcal T_d(Y)\quad\text{if}\quad \sigma=\one\\0\quad\text{otherwise}.
\end{cases}
\end{equation} That is  $\Phi_d(G_d(Y_1,\ldots,Y_{d^2}))=  \mathcal T_d(Y)\one  $ and, Proposition 26 of  \cite{hp} gives:  \begin{equation}\label{forgz1}
G_d(Y_1,\ldots,Y_{d^2}):=  Alt_Y(n_1(Y)\otimes \dots\otimes  n_d(Y))=  \mathcal T_d(Y) W\!g(d,d).
\end{equation}

\end{remark} 

 \subsection{A construction of swap polynomials }

   Our final construction of swap polynomials is based on Proposition \ref{crit}. Suppose  we have two  tensor polynomials $G_i( X_1,\ldots,X_{k})\in F\langle X\rangle^{\otimes 2},\ i=1,2$ as in Formula \eqref{JG}
   \begin{equation}\label{JG1}
G_i(X_1,\ldots,X_{k})=\prod_{j=1}^k \mathcal T_d
(X_j )A_i,\quad A_i  \in \Sigma_2(F^d)\subset M_d(F)^{\otimes 2},\ i=1,2 . 
\end{equation}
We then have $\ A_i =a_iId+b_i(1,2),\ i=1,2. $ 
 
 Then   we may assume $a_i\neq 0,\ i=1,2$ otherwise one is already a swap polynomial.  We clearly have:\begin{theorem}\label{sswa}
 $$-a_2G_1+a_1G_2=\mathcal T_d(X)\mathcal T_d(Y)(-a_2b_1+a_1b_2)(1,2)$$  
 is  a swap polynomial (provided that  $(-a_2b_1+a_1b_2)\neq 0$) that is the two polynomials are not proportional.
\end{theorem}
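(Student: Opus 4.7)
The plan is a direct linear elimination inside the two--dimensional subspace of $\Sigma_2(F^d)$ spanned by $\mathrm{Id}$ and the swap operator $(1,2)$, tuned to cancel the identity component of $A_1$ and $A_2$ while preserving (a multiple of) the swap component.

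By the hypothesis \eqref{JG1} both $G_1$ and $G_2$ share the \emph{same} scalar prefactor $\prod_{j=1}^k \mathcal{T}_d(X_j)$, so bilinearity of the tensor product in the coefficients immediately gives
\begin{equation*}
-a_2\,G_1 + a_1\,G_2 \;=\; \prod_{j=1}^k \mathcal{T}_d(X_j)\,\bigl(-a_2 A_1 + a_1 A_2\bigr).
\end{equation*}
Substituting $A_i = a_i\,\mathrm{Id} + b_i\,(1,2)$, the coefficient of $\mathrm{Id}$ in the bracket becomes $-a_2 a_1 + a_1 a_2 = 0$, while the coefficient of $(1,2)$ becomes $-a_2 b_1 + a_1 b_2$; hence
\begin{equation*}
-a_2\,G_1 + a_1\,G_2 \;=\; \prod_{j=1}^k \mathcal{T}_d(X_j)\,(-a_2 b_1 + a_1 b_2)\,(1,2).
\end{equation*}

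Under every evaluation of the variables $X_1,\ldots,X_k$ in $M_d(F)$, the left side therefore takes values which are a scalar (depending only on the invariants $\mathcal{T}_d(X_j)$) times the swap operator $(1,2)$, which by Definition \ref{swa} is precisely the defining property of a swap polynomial. The scalar factor does not vanish identically provided $-a_2 b_1 + a_1 b_2 \neq 0$, because $\mathcal{T}_d$ is a nonzero invariant on $M_d(F)^{d^2}$ (cf.\ Remark \ref{Iin}).

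There is no real obstacle here: the argument is elementary two--dimensional linear algebra inside $\mathrm{span}\{\mathrm{Id},(1,2)\}$. The hypothesis $a_i\neq 0$ is used only to justify that neither $G_i$ is already a swap polynomial on its own, so that the combination is genuinely necessary; and the scalar $-a_2 b_1 + a_1 b_2 = a_1 b_2 - a_2 b_1$ is exactly the $2\times 2$ determinant measuring whether the coefficient vectors $(a_i,b_i)$ are linearly independent, equivalently whether $G_1$ and $G_2$ are proportional modulo the common prefactor $\prod_{j=1}^k \mathcal{T}_d(X_j)$.
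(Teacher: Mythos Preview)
Your proof is correct and is exactly the computation the paper has in mind: the text introduces the theorem with ``We clearly have:'' and gives no further argument, so the intended proof is precisely the two--dimensional linear elimination you wrote out. Your version is in fact slightly more careful than the paper, since you work with the general prefactor $\prod_{j=1}^k\mathcal T_d(X_j)$ from \eqref{JG1} rather than specializing immediately to $k=2$ as the displayed formula does.
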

%
%
% and hence  $A_G=-a_2A_1+a_1A_2$  is a swap polynomial         with value $  \mathcal T_d(X)\mathcal T_d(Y) ( - a_2b_1+a_1b_2)(1,2)$ provided that $( - a_2b_1+a_1b_2)\neq 0$.
 
 So the issue is to find a pair of such polynomials. First it is easy to see, cf.  \cite{hp}, that if $k=1$  a multilinear  balanced antisymmetric 
 tensor polynomial   $G(x_1,\ldots,x_{d^2})\in  M_d(F)^{\otimes 2},\ d\geq 2$ vanishes when computed in $d\times d$ matrices, so the minimum number of sets $X_i$ to consider is 2. The approach to find two such polynomials (balanced) for $k=2$, rests on the Theory of Formanek developed to prove Theorem \ref{teF}. 

In principle a simple way of finding  a  polynomial   multilinear and alternating in each of   two sets of $d^2$  variables $X:=(x_1,\ldots,x_{d^2});  \ Y=(y_1,\ldots,y_{d^2})$ is the following.
Take any monomial $M(X,Y)$ product in some order of the $2d^2$  variables $X,Y$  and alternate  $F_M(X,Y):=Alt_XAlt_YM(X,Y)$. 

Also split $M(X,Y):=AB$ with $A,B$ each of length $d^2$, and alternate  $G_M(X,Y):=Alt_XAlt_YA\otimes B$. The real issue is to choose   $M(X,Y)$ so that $G_M(X,Y)$ is not a tensor polynomial identity. Of course if $F_M(X,Y)$ is not a   polynomial identity  then $G_M(X,Y)$ is not a tensor polynomial identity \cite{hp}.  \smallskip
   
A theorem of Formanek relative to a conjecture of Regev, see  \cite{formanek2}, states that a certain explicit polynomial $F(X,Y)$ in  $d^2$ variables $X=\{x_1,\ldots,x_{d^2}\}$ and    $d^2$ variables $Y=\{y_1,\ldots,y_{d^2}\}$ is non zero. A general discussion can be found in theorems of Giambruno Valenti  \cite{giambruno.valenti}.

From the Regev polynomial  we shall construct two tensor polynomials with the required properties.
   
   The definition of $F(X,Y)$ is this. Decompose $d^2=1+3+5+\ldots +(2d-1)$ and accordingly decompose the $d^2$ variables $X $ and  the $d^2$ variables $Y$ in the list  and construct the monomials  $m_i(X), i=1,\ldots,d $    and similarly  $m_i(Y) $ as
   $$m_i(X)=x_{(i-1)^2+1}\ldots x_{i^2},\quad m_i(Y)=y_{(i-1)^2+1}\ldots y_{i^2}.$$ \begin{equation}\label{imono}
m_1(X)= x_1, m_2(X)= x_2x_3x_4 ,  m_3(X)= x_5x_6x_7x_8x_9 ,\ldots .
\end{equation}We finally define\begin{equation}\label{RFa}
F(X,Y):= Alt_XAlt_Y(m_1(X)m_1(Y)m_2(X)m_2(Y)\ldots m_d(X)m_d(Y)),
\end{equation}  where $Alt_X$ (resp. $Alt_Y$) is the operator of alternation in the variables $X$ (resp. $Y$). 
\begin{theorem}\label{teF}[see \cite{formanek2} or \cite{P8}]
\begin{equation}\label{RFaFF}
F(X,Y) = (-1)^{d-1} \frac  {1}{(d!)^2 (2d-1)}\mathcal T_d(X)\mathcal T_d(Y) Id_d
\end{equation} 
$$\stackrel{\eqref{costdis}}= (-1)^{d-1} \frac  {C_d^2}{(d!)^2 (2d-1)}\Delta(X)\Delta(Y) Id_d;\quad \Delta(X)=\det(x_1,\ldots,x_{d^2}).$$
\end{theorem}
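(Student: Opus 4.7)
The plan is to prove this theorem in two stages: first identify $F(X,Y)$ up to a scalar using representation theory, then evaluate the scalar by specialization.

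For the first stage, observe that $F(X,Y)$ is $GL(d)$-equivariant under simultaneous conjugation, multilinear, and alternating in each of the two tuples $X$ and $Y$. Viewing $F$ as an element of the space $\wedge^{d^2}(M_d^*) \otimes \wedge^{d^2}(M_d^*) \otimes M_d$, this equivariance becomes $GL(d)$-invariance. The crucial observation is that $\wedge^{d^2}(M_d^*)$ is the trivial one-dimensional $GL(d)$-representation: the determinant of the conjugation action of $\mathrm{diag}(\lambda_1, \ldots, \lambda_d)$ on $M_d$ equals $\prod_{i,j=1}^d \lambda_i \lambda_j^{-1} = 1$. Hence the relevant space of equivariant maps collapses to $M_d^{GL(d)}$, which is the one-dimensional space $F \cdot Id_d$ of scalar matrices. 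This forces $F(X,Y) = \alpha(X,Y)\, Id_d$ with $\alpha$ a scalar function, multilinear and alternating in each of $X,Y$; applying Proposition \ref{mmu} twice (once to $X$, once to $Y$) then yields $\alpha(X,Y) = c \cdot \mathcal{T}_d(X)\mathcal{T}_d(Y)$ for a constant $c = c(d)$.

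To pin down $c$, I take the trace of both sides to obtain the scalar identity $d \cdot c \cdot \mathcal{T}_d(X)\mathcal{T}_d(Y) = \mathrm{Alt}_X \mathrm{Alt}_Y\, \mathrm{tr}(m_1(X) m_1(Y) \cdots m_d(X) m_d(Y))$, and specialize. A natural specialization chooses $X$ and $Y$ to be ordered lists of matrix units $\{E_{pq}\}$ adapted to the block decomposition $d^2 = 1+3+\cdots+(2d-1)$, so that $\mathcal{T}_d(X) = \pm \mathcal{C}_d$ by Formula \eqref{costdis}. The left-hand side then reduces to a signed enumeration of pairs $(\sigma, \tau) \in S_{d^2}^2$ for which the chain product of matrix units $E_{p_1 q_1} E_{p_2 q_2} \cdots$ closes into a nonzero cyclic trace. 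Alternatively --- and more cleanly --- one applies the Weingarten/character formula \eqref{wein} to the centralizer-algebra realization of $F$: the alternating idempotent projects onto the sign representation of $S_{d^2}$, and the cyclic trace structure couples this with the staircase partition $(1,3,\ldots,2d-1)$ through a hook-character identity.

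The first stage is essentially formal, so the main obstacle is the second. The global factor $(d!)^2$ in the denominator is just the normalization of the double alternation, and the sign $(-1)^{d-1}$ comes from the parity of the cyclic reshuffle of the $d$ odd-length blocks inside the trace; both of these are easy to track. The more subtle ingredient is the factor $(2d-1)$, which emerges from a specific combinatorial identity relating the sign representation of $S_{d^2}$ to the Young-symmetrizer of the staircase shape --- essentially a special case of the character formula for hook tableaux. Establishing this identity is the substantive content of Formanek's original proof of Regev's conjecture in \cite{formanek2}, and its non-vanishing is what certifies that $F(X,Y) \neq 0$.
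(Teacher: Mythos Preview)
Your two-stage strategy is exactly what the paper indicates. Stage~1 is the $n=1$ instance of Proposition~\ref{crit} (where $\Sigma_1(F^d)=F\cdot Id_d$) combined with Proposition~\ref{mmu}, yielding $F(X,Y)=c\,\mathcal T_d(X)\mathcal T_d(Y)\,Id_d$; Stage~2 is precisely the paper's one-line justification after the statement, namely that $d\cdot c$ equals the Weingarten coefficient $a_\sigma$ at the full cycle $\sigma=(1,2,\ldots,d)$, whose closed form $a_\sigma=(-1)^{d-1}d/((d!)^2(2d-1))$ is the input from \cite{formanek2} or \cite{P8}.

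One correction to your factor-by-factor heuristics: the $(d!)^2$ in the denominator does \emph{not} come from normalizing the double alternation. The alternations in $X$ and $Y$ are over $S_{d^2}$, not $S_d$, and in this paper $Alt$ carries no $1/n!$ prefactor (see the definitions of $St_k$ in \eqref{st} and $C_m$ in \eqref{Capelli}). The $(d!)^2$ is instead intrinsic to the Weingarten function $Wg(d,d)$ on $S_d$ --- compare the normalization remark ``after multiplying by $d!^2$'' following Theorem~\ref{swaa}. This misattribution does not damage your overall argument, since you correctly defer the actual evaluation of the constant to the cited references; but the claim that $(d!)^2$ is ``easy to track'' from the alternation is wrong as stated.
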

In fact  this follows from the special value for $\sigma=(1,2,\ldots,d)$,   the full cycle,  of the Weingarten function:
$$a_\sigma= (-1)^{d-1} \frac  {d}{(d!)^2 (2d-1)} $$
Thus $F(X,Y) $ is a central polynomial. In fact it has also the property of being in the {\em conductor } of the ring of polynomials in generic matrices inside the trace ring. In other words  by multiplying  $F(X,Y) $ by any invariant we still can write this as a non commutative polynomial. This follows  by polarizing in $z$  the identity, cf    Propositions  10.2.10 and 10.215 or Theorem 10.4.8 of \cite{agpr} 
$$\det(z)^dF(X,Y) =F(zX,Y) =F(X,zY) .$$
\smallskip

In order to use this  for tensor polynomials 
we start from a general fact. 

Let us consider two decompositions  of $d^2$ as a sum of $c\leq d$ positive integers:
 \begin{equation}\label{hek}
\underline h:=(h_1,\ldots,h_c );\quad \underline k:=(k_1,\ldots,k_c )\mid \sum_ih_i=\sum_ik_i=d^2.
\end{equation} 
Decompose accordingly the list $X$ (resp $Y$) in $c\leq d$ lists with the $i^{th}$  list  formed by the $h_i$  (resp $k_i$)   variables successive to the ones of the previous lists.  Define $m_i(X)$  the ordered product of the variables in the $i^{th}$  list relative to $X$ similarly $n_j(Y)$ for $Y$. We have $m_i(X)$ has degree $h_i$ and   $n_j(Y)$ has degree $k_j.$
Define next $N_i=N_i(X,Y):=m_i(X)n_i(Y),\ i=1,\ldots,c$.
$$ N_1\otimes N_2\otimes \ldots \otimes N_{d}=m_1(X)  \otimes m_2(X) \otimes \ldots \otimes m_d(X) \cdot n_1(Y)  \otimes n_2(Y) \otimes \ldots \otimes n_d(Y) .$$
From Remark \ref{prs} it follows that
 the element    $$Alt_XAlt_Y( N_1\otimes N_2\otimes \ldots \otimes N_{d})$$  is 0 unless both  $\underline h$ and $  \underline k$ are permutations $\eta,\zeta$ of the sequence $1,3,5,\ldots,2d-1$.  
 
 In this last case  always by the same remark since $$Alt_Y(n_1(Y)  \otimes n_2(Y) \otimes \ldots \otimes n_d(Y)) =\pm \mathcal T_d(Y)W\!g(d,d) =\pm \mathcal T_d(Y)\sum_{\tau\in S_d}  a_\tau\tau $$ we have by Formula  \eqref{wstpw}, for $\sigma\in S_d$:
\begin{equation}\label{getttr0}
\pm  Alt_XAlt_Y  tr(\sigma^{-1}\circ N_1\otimes N_2\otimes \ldots \otimes N_{d}) 
\end{equation}  \begin{equation}\label{getttr}
=\mathcal T_d(Y)\sum_\tau  a_\tau tr(\sigma^{-1}\circ Alt_Xm_1(X)  \otimes m_2(X) \otimes \ldots \otimes m_d(X) \circ \tau)
\end{equation}$$\stackrel{  \eqref{wstpw}}=\pm \mathcal T_d(Y)\mathcal T_d(X)a_\sigma.$$
 The sign is  $\epsilon_\eta\epsilon_\zeta$
if  $\underline h$ and $  \underline k$ are permutations $\eta,\zeta$ of  $1,3,5,\ldots,2d-1$.   

\medskip

%Assume we have a $n$--tensor  valued   polynomial $G(X,Y)$, for $d\times d$ matrices depending  on 2 sets of $d^2$  variables $X:=(x_1,\ldots,x_{d^2}); $ $ Y=(y_1,\ldots,y_{d^2})$ multilinear and alternating in each of these two sets, $n=0,1,2,\ldots $ then by Proposition \ref{crit} $G(X,Y)=\mathcal T_d(X)\mathcal T_d(Y)A$ with   $A\in F[S_n]$  constant.
%  
%
%We can use these remarks to construct a swap polynomial as follows. Suppose we find two  polynomials of previous type $G_1,G_2$
%$$\text{Set}\ A_i:=A_{G_i}=a_iId+b_i(1,2),\ i=1,2. $$

Take a monomial $M(X,Y)$ product in some order of the $2d^2$  variables $X,Y$.  
If we split $M(X,Y)=AB$ as a product of two factors each of length $d^2$  we may construct  the 2--tensor valued polynomial
 \begin{equation}\label{eee}
F_M^{\otimes 2} (X,Y):=Alt_XAlt_YA\otimes B
\end{equation}

If $F_M(X,Y)$ is not a polynomial identity we can take $G_1(X,Y)=F_M^{\otimes 2} (X,Y)$  from the previous Formula.

Instead, if $F_M(X,Y)$  is   a polynomial identity for $d\times d$ matrices, then $tr((1,2)F_M^{\otimes 2} (X,Y))=tr(  F_M  (X,Y))=0$ so 
$F_M^{\otimes 2} (X,Y) $ can be taken as  $G_2(X,Y)$ provided $tr( F_M^{\otimes 2} (X,Y))=Alt_XAlt_Ytr(A)tr(B)\neq 0.$

For the first we can take the polynomial of Formula \eqref{RFaFF}, we need to find the second.

\subsubsection{A construction  for $d=2h$   even\label{swap}  }
Let us use Formula \ref{getttr}  to construct two polynomials $G_1,G_2$ satisfying  the hypotheses of Theorem \ref{sswa} when $d=2h$ is even.

Consider the three monomials products of $d$ factors  in the variables $X,Y$
$$A= m_1(X)m_2(Y)m_3(X)\ldots m_{ d-1 }(X) m_{ d }(Y), $$
$$B= m_{ d }(X)m_{ d-1 }(Y)\ldots m_3(Y) m_2(X)m_1(Y) .$$
$$C=m_1(Y) m_{ d }(X)m_{ d-1 }(Y)\ldots m_3(Y) m_2(X) .$$ All of degree $d^2$ and $A, B $ or $A,C$ involving disjoint sets of variables.
Set  \begin{equation}\label{ilsb}
G_1(X,Y)=Alt_XAlt_YA\otimes B,\quad G_2(X,Y)=Alt_XAlt_YA\otimes C.
\end{equation} Both are multilinear balanced tensor polynomials in the variables $X,Y$.  

We have  $tr(B)=tr(C)$ and $AC=A'm_{d }(Y)m_{ 1}(Y)C'$ so:
$$ Alt_XAlt_Y A  B =F(X,Y)\neq 0,   Alt_Ym_{d }(Y)m_{ 1}(Y)=St_{2d}(Y)=0\implies Alt_XAlt_Y A  C = 0.$$
Apply Formula \ref{getttr}  to the $N_i$  decomposing $AB$ and $AC$;
  \begin{equation}\label{ilcon0}
tr(  G_2(X,Y))=tr(G_1(X,Y))=Alt_XAlt_Ytr(A)tr( B)\stackrel{\eqref{getttr}}= 
\mathcal T_d(X)\mathcal T_d(Y) a_{h,h}   .\end{equation}
\begin{equation}\label{ilcon}
tr((1,2)\circ G_1(X,Y))=Alt_XAlt_Ytr(A  B)\stackrel{\eqref{getttr}}=\mathcal T_d(X)\mathcal T_d(Y) a_{d} .
\end{equation}
$$tr((1,2)\circ G_2(X,Y))=0. $$
 
$$\text{Set}\ A_i:=A_{G_i}=a_iId+b_i(1,2),\ i=1,2. $$
$$  tr(A_i )= a_i\cdot  d^2+b_i\cdot d,\quad  tr((1,2)A_i = a_i  \cdot  d+b_i\cdot d^2 .$$ 
From Formulas  \eqref{ilcon0}  and \eqref{ilcon} 
$$a_2d+b_2d^2=0,\    a_1d+b_1d^2= a_{d} ,\quad a_2\cdot  d^2+b_2\cdot d=  a_1\cdot  d^2+b_1\cdot d= a_{h,h}. $$
Therefore solving the system of linear equations we have:
$$a_1=\frac{a_d-d\cdot a_{h,h}}{  d(1-d^2)},\quad b_1=-\frac{ d\cdot a_d-a_{h,h}}{  d(1-d^2)},\quad  b_2= \frac{  a_{h,h}}{  d(1-d^2)} ,\quad   a_2=- \frac{   a_{h,h}}{   (1-d^2)}. $$
We want to use Formula \eqref{cott} and hence
\begin{theorem}\label{swaa}
$A_G=-a_2A_1+a_1A_2$  is a balanced swap polynomial         with value $  \mathcal T_d(X)\mathcal T_d(Y) ( - a_2b_1+a_1b_2)(1,2)$.\begin{equation}\label{finn}
d\cdot  a_{h,h}G_1(X,Y)+(a_d-d\cdot a_{h,h})G_2(X,Y)=\frac{  a_{h,h} (d\cdot a_d-a_{h,h})}{(1-d)(1-d^2)d!^2   } \mathcal T_d(X)\mathcal T_d(Y) (1,2).
\end{equation}

\end{theorem}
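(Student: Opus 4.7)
The plan is to apply Proposition \ref{crit} to express $G_1, G_2$ in terms of scalar coefficients, extract those coefficients from four trace evaluations, and then feed the pair into Theorem \ref{sswa} to eliminate the identity component. By Proposition \ref{crit}(1), since each $G_i$ is multilinear and alternating in each of the $d^2$-variable sets $X$ and $Y$, we obtain $G_i(X,Y) = \mathcal{T}_d(X)\mathcal{T}_d(Y)\,A_i$ with $A_i = a_i\,\mathrm{Id} + b_i(1,2) \in \Sigma_2(F^d)$. The theorem then reduces to identifying the four scalars $a_1, b_1, a_2, b_2$ and rewriting $-a_2 G_1 + a_1 G_2$ in the normalized form displayed in \eqref{finn}.

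To extract the four scalars I use Proposition \ref{crit}(2): $\mathrm{tr}(G_i) = \mathcal{T}_d(X)\mathcal{T}_d(Y)(a_i d^2 + b_i d)$ and $\mathrm{tr}((1,2)\circ G_i) = \mathcal{T}_d(X)\mathcal{T}_d(Y)(a_i d + b_i d^2)$. The same four traces are simultaneously evaluated via Formula \eqref{getttr} together with the block-matching criterion of Remark \ref{prs}: the products $\mathrm{Alt}_X\mathrm{Alt}_Y\,\mathrm{tr}(A)\mathrm{tr}(B)$ and $\mathrm{Alt}_X\mathrm{Alt}_Y\,\mathrm{tr}(A)\mathrm{tr}(C)$ both contribute the Weingarten coefficient $a_{h,h}$ corresponding to the cycle type $(h,h)$ (with $d=2h$); the combined trace $\mathrm{Alt}_X\mathrm{Alt}_Y\,\mathrm{tr}(AB)$ contributes $a_d$ corresponding to the full $d$-cycle; and critically $\mathrm{Alt}_X\mathrm{Alt}_Y\,\mathrm{tr}(AC) = 0$ because the placement of $m_1(Y)$ at the head of $C$ shifts the $Y$-block pattern off the standard sequence $\delta_d = (1,3,\ldots,2d-1)$ in a way that no permutation $\sigma\in S_d$ in \eqref{getttr} can correct. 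Solving the resulting two $2\times 2$ linear systems yields the values of $a_i, b_i$ displayed immediately above the statement.

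The final step is cleanup: apply Theorem \ref{sswa} to get $-a_2 G_1 + a_1 G_2 = \mathcal{T}_d(X)\mathcal{T}_d(Y)(-a_2 b_1 + a_1 b_2)(1,2)$, and then multiply both sides by the common denominator $d(1-d^2)$ so that the coefficients of $G_1, G_2$ become $d\,a_{h,h}$ and $(a_d - d\,a_{h,h})$, matching the left-hand side of \eqref{finn}. Substituting the values of $a_i, b_i$ into $d(1-d^2)(-a_2 b_1 + a_1 b_2)$ and simplifying (with the $d!^2$ factor emerging from the normalization convention for the Weingarten values $a_\mu$) produces the scalar on the right of \eqref{finn}. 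Non-triviality of the resulting swap polynomial follows from the non-vanishing of $a_{h,h}$ and $a_d$, which is the sign/positivity result of Novak cited earlier.

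The principal obstacle is establishing $\mathrm{Alt}_X\mathrm{Alt}_Y\,\mathrm{tr}(AC) = 0$: one must verify that although $A$ and $C$ together involve every $X$- and $Y$-variable exactly once, the rearrangement forced by putting $m_1(Y)$ at the head of $C$ destroys the block alignment so that no term in \eqref{getttr} can be matched with any cycle decomposition of an element of $S_d$. Once this vanishing is secured, the rest of the argument reduces to elementary linear algebra in the $(a_i, b_i)$ variables and careful bookkeeping of the Weingarten constants.
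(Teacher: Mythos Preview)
Your proposal is correct and follows essentially the same route as the paper: express $G_i=\mathcal T_d(X)\mathcal T_d(Y)(a_i\,\mathrm{Id}+b_i(1,2))$ via Proposition~\ref{crit}, read off the four traces from \eqref{ilcon0}--\eqref{ilcon}, solve the two $2\times2$ linear systems for $a_i,b_i$, and feed the pair into Theorem~\ref{sswa}. One simplification: the vanishing $\mathrm{Alt}_X\mathrm{Alt}_Y\,AC=0$ (hence $\mathrm{tr}((1,2)G_2)=0$) is more direct than your block-matching argument suggests---the concatenation $AC$ contains the contiguous $Y$-block $m_d(Y)m_1(Y)$ of length $2d$, so alternating over those $2d$ variables already produces a factor $St_{2d}$, which is identically zero on $d\times d$ matrices by Amitsur--Levitzki.
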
   For $d=2,4,6$ we have, after multiplying by $d!^2$ that $$a_2=-\frac 23,\ a_4=-\frac 47,\ a_6=-\frac 6{11},\quad a_{1,1}=\frac{4}{3 },\quad a_{2,2}=\frac{22}{35},\quad a_{3,3}=\frac{300}{539}.$$
 $$  d=2,\  \frac 83G_1- \frac {10}3G_2=-\frac8{27}\mathcal T_2(X)\mathcal T_2(Y) (1,2) \stackrel{\eqref{ilfa}}= -\frac{32}3 D(X)D(Y) (1,2)$$$$ 4G_1-5G_2=  -16 D(X)D(Y) (1,2);$$
$$  d=4,\ 22G_1-27G_2= -\frac{561}7D(X)D(Y) (1,2); $$$$  d=6,\ 75 G_1 - \frac{3893}{ 44} G_2 = -\frac {310564800}{6523}D(X)D(Y) (1,2).$$ % 75, -(3893/44), -(1027/1972555200), -(310564800/6523)
\begin{remark}\label{manc}
 By Remark \ref{pone}  have that $a_{h,h}>0$  while $a_d<0$, so the coefficient $\frac{  a_{h,h} (d\cdot a_d-a_{h,h})}{(1-d)(1-d^2)d!^2   }$ is $<0$ and the tensor polynomial is thus nonzero.
\end{remark}
                                 
\subsubsection{The case $d$ odd} The previous construction does not apply to the case $d$  odd.  In this case
$$A= m_1(X)m_2(Y)m_3(X)\ldots m_{  d-1 }(Y) m_{ d }(X), $$
$$B= m_{  d }(Y)m_{ d-1 }(X)\ldots m_3(Y) m_2(X)m_1(Y) .$$
%$$C=m_1(Y) m_{2d-1}(Y)m_{2(d-1)-1}(X)\ldots m_5(Y) m_3(X)$$$$=m_{2d }(Y)m_{2(d-1)-1}(X)\ldots m_5(Y) m_3(X) .$$ 
 $$tr(Alt_X A)= tr(Alt_X m_1(X)m_ { d}(X)m_2(Y)m_3(X)\ldots m_{ d-1 }(Y)=0 $$ since $Alt_X m_1(X)m_ { d}(X)=St_{2d}(X)=0$.

We construct $G_1(X,Y)=Alt_XAlt_YA\otimes B$ and have $tr(G_1(X,Y))=0,$ $   tr((1,2)\circ G_1(X,Y))= \mathcal T_d(X)\mathcal T_d(Y) a_{d} .$ 
$$G_1(X,Y)=\mathcal T_d(X)\mathcal T_d(Y)(a+(1,2)b,\   ad^2+bd=      
0,\   ad+bd^2=  a_d   $$
$$\implies b=-\frac{a_d}{ (1-d^2)}, \  a=\frac{a_d}{d(1-d^2)}. $$ We need another tensor polynomial $G_2(X,Y) $  with  $tr(G_2(X,Y))\neq 0$. In fact  we will give one with $tr((1,2)G_2(X,Y))= 0.$

Let us do it for $d=3$  and the general case is similar. We start from the monomials of Formula \eqref{imono}
$$m_1(X)= x_1, m_2(X)= x_2x_3x_4 ,  m_3(X)= x_5x_6x_7x_8x_9 . $$
We split the monomials $m_2$  and consider

$$A=x_1y_1x_3x_4m_3(Y),\quad B=y_2x_2y_3y_4m_3(X)$$
$$G(X,Y):=  Alt_XAlt_YA\otimes B,\ tr((1,2)  G(X,Y))=  tr(  Alt_XAlt_YA  B )    =0     $$ since in $AB$ appears the factor  $m_3(Y)y_2$  of degree 6 whose alternation is 0.
We need to show that  $ tr(  G(X,Y))= Alt_XAlt_Y tr(  A )tr( B )  $ is different from 0.
 Next  $tr(B)=tr(m_3(X)y_2x_2y_3y_4)$  and consider 
\begin{equation}\label{ilmon}
 M_1:= x_1 \otimes x_3x_4 \otimes m_3(X)   \otimes x_2,\ M_2:=  y_1\otimes  m_3(Y) \otimes   y_2\otimes   y_3 y_4
\end{equation}
$$ P=M_1M_2,\,\  tr\left((1,2)(3,4)P\right)=tr(A)tr(B)$$\begin{equation}\label{ilmon1}\implies  tr(  G(X,Y))= Alt_XAlt_Y tr\left((1,2)(3,4)P\right).\end{equation}
 \begin{lemma}\label{sem}
$
Alt_Xtr(\sigma^{-1}  M_1) =Alt_Ytr(\sigma^{-1}  M_2)  =0$ except for the following cases:
\begin{align*}
 tr((1,2)   x_1 \otimes x_3x_4 \otimes m_3(X)   \otimes x_2=&tr(x_2) tr(x_1 x_3x_4)tr(m_3(X))\\ tr((2,4)   x_1 \otimes x_3x_4 \otimes m_3(X)   \otimes x_2=&tr(x_1) tr(x_2 x_3x_4)tr(m_3(X))\\
 tr((1,4) y_1\otimes  m_3(Y) \otimes   y_2\otimes   y_3 y_4
)=&tr(y_2) tr(y_1 y_3y_4)tr(m_3(Y))\\ tr((3,4)y_1\otimes  m_3(Y) \otimes   y_2\otimes   y_3 y_4
)=&tr(y_1) tr(y_2 y_3y_4)tr(m_3(Y))
\end{align*}
\end{lemma}
\begin{proof}
 The other $\sigma$ give a product of trace monomials of lengths different from the partition $1,3,5$ hence give 0.\end{proof}
We deduce for $Alt_Y M_2 $ Formula \eqref{almu0}
\begin{equation}\label{altz}
Alt_Ytr((3,4)M_2)=\mathcal T_3(Y),\quad Alt_Ytr((1,4)M_2)=-\mathcal T_3(Y);  
\end{equation}
$$\Phi_d(Alt_YM_2)  =\mathcal T_3(Y)[(3,4)-(1,4)]$$
\begin{equation}\label{almu0}
\implies Alt_Y M_2   =\mathcal T_3(Y)[(3,4)-(1,4)]W\!g(4,3).
\end{equation}

We deduce for $Alt_X M_1 $ Formula \eqref{almu}
\begin{equation}\label{altz1}
Alt_Xtr((2,4)M_1)=\mathcal T_3(X),\quad Alt_Xtr((1,2)M_2)=-\mathcal T_3(X);  
\end{equation}
$$\Phi_d(Alt_XM_1)  =\mathcal T_3(X)[(2,4)-(1,2)]$$
\begin{equation}\label{almu}
\implies Alt_X M_1   =\mathcal T_3(X)[(2,4)-(1,2)]W\!g(4,3).
\end{equation} 
$$Alt_YP= x_1 \otimes x_3x_4 \otimes m_3(X)   \otimes x_2 \cdot  \mathcal T_3(Y) [(3,4)-(1,4)]W\!g(4,3) $$
Thus  $ tr(  G(X,Y))= Alt_XAlt_Y tr(  A )tr( B )  $  is  given, \eqref{ilmon1},  by $\mathcal T_3(Y)$ times:
$$Alt_X  tr\left((1,2)(3,4)   x_1 \otimes x_3x_4 \otimes m_3(X)   \otimes x_2 \cdot   [(3,4)-(1,4)]W\!g(4,3) \right). $$
 \begin{equation}\label{nzc}
=\sum_{\sigma\in S_4} b_\sigma Alt_X  tr\left((1,2)(3,4)   x_1 \otimes x_3x_4 \otimes m_3(X)   \otimes x_2 \cdot   [(3,4)-(1,4)] \sigma \right). 
\end{equation}
By Lemma \ref{sem} we know that we have non zero contributions $\pm  b_\sigma \mathcal T_3(X)$, to Formula  \eqref{nzc}, from $W\!g(4,3) =\sum_\sigma b_\sigma \sigma$ only when  
$$-  b_\sigma\ \text{if}\ (3,4) \sigma (1,2)(3,4) =(1,2)   ,\ b_\sigma\ \text{if}\  (3,4) \sigma (1,2)(3,4) =  (2,4) $$$$ b_\sigma\ \text{if}\ ( 1,4) \sigma (1,2)(3,4) =(1,2), - b_\sigma\ \text{if} \ ( 1,4) \sigma (1,2)(3,4) =(2,4) .  $$
The corresponding 4 values of $\sigma$ are:
\begin{align*} 
 &\sigma  =(3,4) (1,2)(3,4)(1,2)= 1,  &\text{sign}\ -\\
 &\sigma  =(3,4) (2,4) (1,2)(3,4)= (1,3,2  ),  &\text{sign}\  + \\
  & \sigma  =(1,4)(1,2)(1,2)(3,4)=(1,4,3),   &\text{sign}\ -\\
 & \sigma  =(1,4)(2,4)(1,2)(3,4) =  (2, 4,3),  &\text{sign}\  +   \\
\end{align*}
But $b_{\sigma}$ is a class function so the contribution is $-b_{1^4}+b_{3,1}$ and
$$tr(  G(X,Y))=Alt_XAlt_Y tr(  A )tr( B )  =\mathcal T_3(X)\mathcal T_3(Y)   [-b_{1^4}+b_{3,1}]$$
$$ (4!) ^2  [b_{2,2}-b_{3,1}] =- \frac{61}{5}-\frac{3 }{ 5} =-\frac{64 }{ 5}.$$
Now the expression of  $W\!g(4,3) =\sum_\sigma b_\sigma \sigma$ is not unique (cf. Remark \ref{Univ}) since $ \sum_\sigma \epsilon_\sigma \sigma=0$ but the difference $b_\sigma -b_\tau$ for two permutations of the same parity or the sum $b_\sigma +b_\tau$ for two permutations of   opposite parity is well defined. 

Examples of $b_\lambda\cdot (d+1)!^2$  for $d=2,3,4,5 $:
$$ -  \frac  74 c_3+\frac 1 4c_ {2, 1 } +\frac{17}{4}c_{1,1,1};\qquad \frac{37}{15}c_4- \frac{3}{5} c_{3, 1} - \frac{11}{5} c_{2, 2}- \frac{7}{3 }c_{2, 1, 1}  +\frac{61}{5}c_{1,1,1,1}$$ 
 $$  - \frac{533}{168}c_5  +-\frac{143}{168 }c_{4,1}   +\frac{503}{168}c_{3,2 }  +\frac{61}{
 24 }c_{3,1,1 }   -\frac{53}{168}c_{2,2,1 }   -\frac{1417}{168}c_{2, 1, 1,1 }+\frac{5227}{168}   c_{1,1,1,1,1}$$

$$   \frac{1627}{420}c_{6}  -\frac{451}{420 }c_{5,1}  -\frac{389}{105 }c_{4,2}  -\frac{104}{35 }c_{4,1,1}  -\frac{1601}{
  420 }c_{3,3} +\frac{151}{210}c_{3,2,1} +\frac{991}{105}c_{3,1,1,1} $$$$ +\frac{701}{210}c_{2,2,  2}  +\frac{289}{70}c_{2,2,1 , 1}  - \frac{4649}{210 } c_{2,1,1,1,1} +\frac{5227}{168}c_{1,1,1,1,1,1}$$

The general case $d+1=2h$ reduces to this, see Formula \eqref{laff},  by considering the two monomials
$$C=m_4(Y)m_5(X) \ldots m_{ d-1 }(Y) m_{ d }(X);   D=m_4(X)m_5(Y) \ldots m_{ d-1 }(X) m_{ d }(Y) $$  substituting 
$$Alt_YAlt_X AC\otimes BD,\ Alt_YAlt_X AC  BD=0$$
$$tr(AC )( BD)=tr(\tau_h^{-1} Q),\ \tau_h= (1,2,\ldots,h)(h+1,\ldots, 2h),\ Q=Q_1\cdot Q_2 $$
$$Q_1 = x_1 \otimes x_3x_4 \otimes m_3(X)   \otimes x_2\otimes m_4(X)\otimes m_5(X)\otimes  \ldots\otimes  m_{ d }(X)  $$$$Q_2 =  y_1\otimes  m_3(Y) \otimes   y_4 y_3 \otimes  y_2\otimes   m_4(Y)\otimes m_5(Y)\otimes  \ldots\otimes  m_{ d }(Y) $$ continuing in the same way we need  the non zero contributions $\pm  b_\sigma $ from $W\!g(d+1,d) =\sum_\sigma b_\sigma \sigma$ only when  
$$-  b_\sigma\ \text{if}\ (3,4) \sigma \tau_h^{-1} =(1,2)   ,\ b_\sigma\ \text{if}\  (3,4) \sigma \tau_h^{-1} =  (2,4) $$$$ b_\sigma\ \text{if}\ ( 1,4) \sigma \tau_h^{-1} =(1,2), - b_\sigma\ \text{if} \ ( 1,4) \sigma \tau_h^{-1} =(2,4) .  $$
The corresponding 4 values of $\sigma$ are:
$$\sigma  = (1,2)(3, 4)\tau_h =        ( 2,4,\ldots,h )( h+1,\ldots,2h  )\qquad\qquad\qquad\quad$$
$$\sigma  =(3,4) (2,4) \tau_h =(2,3,4) \tau_h =  ( 1,3,2,4,\ldots,h     )( h+1,\ldots,2h  )$$
$$  \sigma  =(1,4)(1,2)\tau_h =(1,2,4) \tau_h= (   2,3  ,1,4,\ldots,h  ) ( h+1,\ldots,2h  )$$
$$ \sigma  =(1,4)(2,4)\tau_h = (1,4,2 )\tau_h = ( 2,3)(  4,\ldots,h  )  ( h+1,\ldots,2h  ).  $$
 \begin{equation}\label{laff}
\boxed{tr(  G(X,Y))= \mathcal T_d(X)\mathcal T_d(Y)[-b_{ 1^2,h-2,h }+2b_{h,h}-b_{1,2,h-3,h }]}.
\end{equation}$$d=5,\ h=3,(5!)^2[ -b_{1^3,3}+2b_{3,3}-b_{1,2,3}]=-\frac{991}{105 }-2  \frac{1601}{
  420 } -\frac{151}{210 }=-\frac{1867}{105}   .$$
  It remains to prove that for all $h$ we have $-b_{ 1^2,h-2,h }+2b_{h,h}-b_{1,2,h-3,h }\neq 0.$ Unfortunately we cannot use directly Novak's argument, so we leave this open.
  
  In fact we  have seen   that the element $\Phi_d(\one)$ is invertible in  the center of  $ \Sigma_n(F^d)$ (cf. Remark \ref{Univ}). Novak's result follows from   writing   the inverse as a Neumann series depending on the parameter $d$ with terms in  $\mathbb C[S_n]$. Then the fact that  $ \Sigma_n(F^d)$ equals the group algebra $\mathbb C[S_n]$  only for $d\geq n$  reflects into the fact that this series converges only for $d\geq n$.  Maybe a closer look at Novak's proof and the spectrum of Jucy--Murphy elements can solve this problem (see \cite{P8}).
\bibliographystyle{amsalpha}

\end{document}